\theoremstyle{plain}
\newtheorem{thm}{Theorem}
\newtheorem{theorem}{Theorem}
\newtheorem{lemma}[theorem]{Lemma}
\theoremstyle{definition}
\newtheorem{remark}[theorem]{Remark}
\newtheorem{example}[theorem]{Example}
\newcommand\bA{{\mathbb A}}
\newcommand\bG{{\mathbb G}}
\newcommand\bP{{\mathbb P}}
\newcommand\bfHom{\mathbf{Hom}}
\newcommand\bfR{\mathbf{R}}
\newcommand\cG{{\mathcal G}}
\newcommand\cL{{\mathcal L}}
\newcommand\cM{{\mathcal M}}
\newcommand\cO{{\mathcal O}}
\newcommand\ant{{\rm ant}}
\newcommand\ev{{\rm ev}}
\newcommand\id{{\rm id}}
\newcommand\pr{{\rm pr}}
\newcommand\red{{\rm red}}
\newcommand\reg{{\rm reg}}
\DeclareMathOperator\Gal{Gal}
\newcommand\GL{{\rm GL}}
\DeclareMathOperator\Hilb{Hilb}
\DeclareMathOperator\Hom{Hom}
\DeclareMathOperator\R{R}
\newcommand\Spec{{\rm Spec}}
\DeclareMathOperator\Supp{Supp}
\numberwithin{equation}{section}
\title{On models of algebraic group actions}
\author{Michel Brion}
\date{}
\begin{document}

\maketitle

\begin{abstract}
We show that every action of a smooth algebraic 
group on a variety admits a normal projective model. 
Along the way, we present new proofs of some basic 
results on algebraic transformation groups, 
including Weil's regularization theorem.
\end{abstract}

\section{Introduction}
\label{sec:int}

The main motivation for this paper comes from
Weil's regularization theorem (see \cite{Weil}):
for any rational action of a smooth connected algebraic 
group $G$ on a smooth variety $X$, there exists a $G$-action 
on a smooth variety $Y$ which is $G$-equivariantly birationally 
isomorphic to $X$. This was extended to non-connected 
groups by Rosenlicht (see \cite[Thm.~1]{Rosenlicht}), 
with modern proofs over an algebraically closed
field by Zaitsev and Kraft (see \cite{Zaitsev,Kraft}).

One would like to have a model $Y$ satisfying
additional geometric properties. When working over 
an arbitrary field $k$ as in \cite{Weil,Rosenlicht}, 
there may exist no smooth projective model
(see Remark \ref{rem:imperfect} for details).
Is there a regular projective one? This question has
an affirmative answer when $X$ is a curve: then $G$ acts 
on its regular projective model, see e.g.
\cite[Prop.~5]{Demazure}. For surfaces, an affirmative 
answer is known in various degrees of generality, 
as a key ingredient in the classification of algebraic 
subgroups of groups of birational transformations
(see \cite[\S 2]{Blanc}, \cite[Prop.~2.4]{Fong}, 
\cite[Prop.~2.3]{SZ}).

In this paper, we show that the model $Y$ can be taken 
projective and normal. If $k$ has characteristic $0$, 
it follows that $Y$ can be taken projective and 
smooth, by using equivariant resolution of singularities 
(see \cite[Prop.~3.9.1, Thm.~3.36]{Kollar}). 
Similarly, if $X$ is a surface over an arbitrary field
$k$, then $Y$ can be taken projective and regular, 
by iterating the processes of blowing up the singular 
locus and normalizing (see 
\cite[Rem.~B, p.~155]{Lipman}). In higher 
dimensions and positive characteristics, the existence 
of an equivariant desingularization is an open problem.

When $G$ is connected, the existence of 
a normal projective model is deduced in 
\cite[Cor.~3]{Br17a} from two general results:

\begin{enumerate}

\item[(1)] 
Every normal $G$-variety admits a covering 
by $G$-stable quasi-projective open subsets.

\item[(2)] 
Every normal quasi-projective $G$-variety
admits an equivariant completion by a normal 
projective $G$-variety.

\end{enumerate}

\noindent
Note that (1) fails for non-connected groups,
already for the group with two elements
in view of an example of Hironaka (see
\cite{Hironaka} and \cite[Chap.~IV, \S 3]{MFK}). 
But it extends in a weaker form:

\begin{thm}\label{thm:gen}
Let $G$ be a smooth algebraic group, and $X$ 
a $G$-variety. Then $X$ contains a $G$-stable 
dense open subset which is regular and 
quasi-projective.
\end{thm}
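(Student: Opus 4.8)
The plan is to reduce to the case of a connected group, where statement (1) above applies, and then ``average'' the resulting open subset over the finite group of connected components. Write $G^{\circ}$ for the identity component and $F:=G/G^{\circ}$, a finite $k$-group scheme. \emph{Step 1: reduction to $X$ regular.} The regular locus $X_{\reg}$ is open and dense in $X$; I claim it is $G$-stable, and replacing $X$ by $X_{\reg}$ then also yields the regularity asserted in the conclusion. Since $G$ is smooth, the second projection $p_2\colon G\times X\to X$ is smooth, so $G\times X$ is regular exactly over $X_{\reg}$, i.e.\ $(G\times X)_{\reg}=p_2^{-1}(X_{\reg})$ (regularity ascends along smooth morphisms and descends along faithfully flat ones). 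The action morphism factors as $a=p_2\circ\Phi$, where $\Phi\colon G\times X\to G\times X$, $(g,x)\mapsto(g,g\cdot x)$, is an automorphism of schemes; hence $a^{-1}(X_{\reg})=\Phi^{-1}\bigl((G\times X)_{\reg}\bigr)=(G\times X)_{\reg}=p_2^{-1}(X_{\reg})$, which is exactly the statement that $X_{\reg}$ is $G$-stable. So we may, and do, assume $X$ regular (hence normal).

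\emph{Step 2: averaging.} View $X$ as a (normal) $G^{\circ}$-variety and apply (1): among the $G^{\circ}$-stable quasi-projective open subsets covering $X$, one contains the generic point and is therefore dense; call it $U$. Consider the open subset $a^{-1}(U)\subseteq G\times X$. It is stable under two commuting actions: the left translation $h\cdot(g,x)=(hg,x)$ by $h\in G^{\circ}$, since $U$ is $G^{\circ}$-stable; and the $G$-action $g'\cdot(g,x)=(gg'^{-1},g'\cdot x)$, since $a$ is invariant under it ($a(gg'^{-1},g'\cdot x)=g\cdot x$). Hence the closed complement $Z:=(G\times X)\setminus a^{-1}(U)$ descends, along the free quotient by $G^{\circ}$, to a closed subset $\overline{Z}\subseteq (G\times X)/G^{\circ}=F\times X$ that is stable under the induced $G$-action; and the second projection $q\colon F\times X\to X$ is $G$-equivariant and finite, as $F$ is finite over $k$. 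Therefore $q(\overline{Z})$ is closed and $G$-stable, and I put $Y:=X\setminus q(\overline{Z})$, a $G$-stable open subset of $X$.

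\emph{Step 3: $Y$ has the required properties.} Unwinding the construction on geometric points gives $Y(\kb)=\{x: g\cdot x\in U(\kb)\text{ for all }g\in G(\kb)\}=\bigcap_{g\in G(\kb)}g\cdot U_{\kb}$. Since $U$ is $G^{\circ}$-stable, $g\cdot U_{\kb}$ depends only on the image of $g$ in the finite set $F(\kb)$, so $Y_{\kb}$ is a finite intersection of dense open subsets of $X_{\kb}$, hence dense; thus $Y$ is dense in $X$. Taking $g=e$ gives $Y\subseteq U$, so $Y$ is quasi-projective (an open subscheme of the quasi-projective $U$) and regular (an open subscheme of the regular $X$), as wanted. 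The real input here is statement (1) in the connected case; granting that, the construction is routine, the only point needing care being the behaviour of the regular locus over an imperfect field (addressed in Step 1, where one cannot simply say ``translations are automorphisms'' since $X_{\reg}$ may not be the smooth locus).
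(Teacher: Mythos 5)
Your proof is correct, and while Step 1 and the appeal to statement (1) match the paper, the ``averaging'' step takes a genuinely different route. The paper passes to the separable closure: it forms $V=\bigcap_{g\in G(k_s)}g\cdot U_{k_s}$ (a finite intersection since $G(k_s)/G^0(k_s)$ is finite), uses density of $G(k_s)$ in $G_{k_s}$ --- another use of smoothness of $G$ --- to upgrade $G(k_s)$-stability to $G_{k_s}$-stability, and then recovers a $G$-stable open subset of $X$ by Galois descent. You instead stay over $k$: you form $a^{-1}(U)$, observe it is stable for both the left $G^{\circ}$-translation and the twisted $G$-action, descend its closed complement along the $G^{\circ}$-torsor $G\times X\to (G/G^{\circ})\times X$, and push forward by the finite projection to $X$; the resulting open set is, on geometric points, exactly the same intersection of translates, but it is produced scheme-theoretically without ever extending the base field. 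This buys you independence from Galois descent and from the density of $k_s$-points (smoothness of $G$ is still needed in your Step 1 and inside statement (1) itself), at the modest cost of invoking descent of closed stable subsets along a torsor and closedness of images under finite morphisms. Your Step 1 is also a mild variant: the paper applies flat descent of regularity directly to the action morphism $a$, whereas you factor $a=p_2\circ\Phi$ through the shearing automorphism and use ascent/descent along $p_2$; the two arguments have the same content, and yours has the small aesthetic advantage of giving the equality $a^{-1}(X_{\reg})=p_2^{-1}(X_{\reg})$, i.e.\ both inclusions of stability, in one stroke. All the individual steps (freeness of the left $G^{\circ}$-translation, descent of the stable closed set $Z$ to $\overline{Z}$, finiteness of $F\times X\to X$, the identification of $Y(\kb)$ with $\bigcap_{g\in G(\kb)}g\cdot U_{\kb}$, and the inclusion $Y\subseteq U$ via $g=e$) check out.
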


Also, (2) extends without change:

\begin{thm}\label{thm:eq}
Let $G$ be a smooth algebraic group, and $X$ a normal 
quasi-projective $G$-variety. Then $X$ admits 
an equivariant completion by a normal projective 
$G$-variety.
\end{thm}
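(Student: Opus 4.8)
The plan is to reduce to the case where $G$ is connected, which is known; see \cite{Br17a}: a normal quasi-projective variety equipped with an action of a \emph{connected} algebraic group admits a normal projective equivariant completion. (For $G$ connected and linear this is Sumihiro's equivariant embedding theorem $X\hookrightarrow\bP(V)$, followed by taking the closure of the image and normalizing; the general connected case is reduced to the linear one by the structure theory of algebraic groups.) Since the existence of a $G$-stable quasi-projective open covering fails for disconnected groups, the point is to replace its use in the connected-group argument by a direct construction --- here, an ``averaging'' over the finite component group $F:=G/G^\circ$.

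So assume the result for $G^\circ$, and fix coset representatives $(n_c)_{c\in F}$ in $G$ with $n_e=e$. Viewing $X$ as a $G^\circ$-variety, choose a $G^\circ$-equivariant open immersion $\iota\colon X\hookrightarrow\bar X^\circ$ with $\bar X^\circ$ normal projective and $X$ dense. I would then consider
\[
\Phi\colon X\longrightarrow\prod_{c\in F}\bar X^\circ,\qquad x\longmapsto\bigl(\iota(n_c^{-1}x)\bigr)_{c\in F},
\]
and let $G$ act on $\prod_{c\in F}\bar X^\circ$ by permuting the factors through $F$ while acting on each factor through $G^\circ$; concretely $(g\cdot v)_c=\bigl(n_c^{-1}\,g\,n_{g^{-1}c}\bigr)\cdot v_{g^{-1}c}$, where $g^{-1}c\in F$ denotes the class of $g^{-1}n_c$, and the element $n_c^{-1}\,g\,n_{g^{-1}c}$ lies in $G^\circ$. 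One checks that this is a regular $G$-action: on each connected component of $G$ the permutation of factors is constant and $g\mapsto n_c^{-1}\,g\,n_{g^{-1}c}$ is a morphism into $G^\circ$, so the action morphism is regular componentwise; and one checks that $\Phi$ is $G$-equivariant. Since the component of $\Phi$ at $c=e$ is $\iota$, the morphism $\Phi$ is an immersion, so the closure $\bar X$ of $\Phi(X)$ in $\prod_{c\in F}\bar X^\circ$ is a $G$-stable closed, hence projective, subvariety containing $X\cong\Phi(X)$ as a dense open $G$-stable subvariety.

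Finally I would pass to the normalization $\nu\colon\widetilde{\bar X}\to\bar X$: it is finite, carries a unique $G$-action lifting the one on $\bar X$ (since $G$ is smooth, $G\times\widetilde{\bar X}$ is the normalization of $G\times\bar X$), and is an isomorphism over the normal locus of $\bar X$, hence over $X$. Thus $\widetilde{\bar X}$ is a normal projective $G$-variety containing $X$ as a dense open $G$-stable subvariety, as desired.

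I expect the only delicate point in this argument to be the bookkeeping that shows the twisted action on $\prod_{c\in F}\bar X^\circ$ really is a regular $G$-action and that $\Phi$ intertwines it with the given action on $X$; this is routine but unavoidable, since it is precisely the mechanism that, for disconnected $G$, replaces the $G$-stable quasi-projective open covering (which, by Hironaka's example, need not exist). The substantive geometry --- Sumihiro's theorem, and the structure theory for non-linear connected $G$ --- is concentrated in the connected case, which is used here as a black box.
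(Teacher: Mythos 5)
Your route is genuinely different from the paper's. The paper does not reduce to the connected case: it takes the smallest normal subgroup $H$ of $G$ with $G/H$ proper (so $H$ is affine and connected, whence $X$ is $H$-quasi-projective), embeds $X$ $H$-equivariantly into an $H$-projective scheme $Y$, and then embeds $X$ $G$-equivariantly into the Weil restriction $\R_{G/H}(G \times^H Y)$, realized as an open subscheme of a Hilbert scheme; the finite part and the abelian-variety part of the proper quotient are handled in one stroke, and the argument ends with the same closure-plus-normalization step as yours. Your plan instead uses the connected case (statement (2) of the introduction, from \cite{Br17a}) as a black box and deals with the component group $F = G/G^0$ by the classical co-induction trick. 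The bookkeeping you defer is indeed routine and does check out: the twisted action is associative, $\Phi$ is equivariant, and $\Phi$ is an immersion because its $c = e$ component is. Note that your product with its permutation-twisted action is exactly the split form of $\R_{G/G^0}\bigl(G \times^{G^0} \bar X^\circ\bigr)$, so the two proofs are closer than they look; yours is more elementary (no Hilbert schemes) but outsources the non-affine connected geometry to \cite{Br17a}.

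There is, however, one genuine gap: over a general field the coset representatives $(n_c)_{c \in F}$ need not exist in $G(k)$. For smooth $G$ the quotient $F = G/G^0$ is only a finite \emph{\'etale} group scheme; it need not be constant, and components of $G$ need not have rational points. So $x \mapsto \iota(n_c^{-1}x)$ and the twisted action on the product are only defined after base change to $k_s$, and the resulting completion of $X_{k_s}$ carries no evident Galois descent datum, since both $\Phi$ and the action depend on the chosen $n_c$, which cannot in general be chosen Galois-equivariantly (one gets a cocycle obstruction to be absorbed into a further twist). This is fixable --- either by such a twisted Galois descent, or more cleanly by replacing the product with its intrinsic avatar $\R_{G/G^0}(G \times^{G^0} \bar X^\circ)$ --- but the latter is essentially the mechanism of the paper's proof, so the fix is not free. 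Over a separably closed field, or whenever every component of $G$ has a $k$-point, your argument is complete.
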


If in addition $G$ is affine, then every normal $G$-variety 
(not necessarily quasi-projective)
admits an equivariant completion by a normal proper
$G$-variety, in view of a result of Sumihiro 
(see \cite[Thm.~3]{Su74},  \cite[Thm.~4.13]{Su75}). 
We do not know whether this still holds for 
an arbitrary smooth algebraic group $G$.

This paper is organized as follows. Section 
\ref{sec:ptgen} begins with a new proof
of (1) for a connected group $G$; then we derive 
Theorem \ref{thm:gen} as an easy consequence.
Section \ref{sec:torsors} contains auxiliary results 
on torsors. In Section \ref{sec:afb}, we first recall
the classical construction of the associated fiber bundle, 
also known as contracted product; in loose terms,
it constructs a $G$-variety from an $H$-variety, 
where $H$ is a subgroup of $G$. Then we obtain
an existence result for such bundles (Theorem 
\ref{thm:afb}) which is not used in the proofs of
the main theorems, but which should have independent 
interest.  Section \ref{sec:wr} begins with another 
construction of a $G$-variety from an $H$-variety, 
which is somehow dual to the associated fiber bundle 
construction, and based on Weil restriction. 
This yields the main ingredient for the proof 
of Theorem \ref{thm:eq}. 
In Section \ref{sec:wrt}, we present a new proof 
of Weil's regularization theorem,
which is shorter but less self-contained than
the original one.
Actions of non-smooth groups are considered
in Section \ref{sec:ns}; for these, we show that
projective models still exist, but normal 
projective models do not. We also extend
Weil's regularization theorem to this setting
(Theorem \ref{thm:weil}).

\medskip

\noindent
{\bf Notation and conventions.}
We consider schemes over a field $k$ with 
algebraic closure $\bar{k}$, separable closure 
$k_s$ and absolute Galois group $\cG = \Gal(k_s/k)$.
Morphisms and products of schemes are understood 
to be over $k$ as well. Schemes are assumed to be 
separated and of finite type unless explicitly 
mentioned. 
A \emph{variety} $X$ is an integral scheme. 
The function field of $X$ is denoted by $k(X)$.

An \emph{algebraic group} $G$ is a group scheme;
we denote by $e \in G(k)$ its neutral element.
By a \emph{subgroup} $H$ of $G$, we mean a  
subgroup scheme; then $H$ is closed in $G$. 
A $G$-\emph{scheme} is a scheme $X$ 
equipped with a $G$-action
\[ a : G \times X \longrightarrow X, 
\quad (g,x) \longmapsto g \cdot x. \]
A morphism of $G$-schemes $f : X \to Y$ is
\emph{equivariant} if 
$f(g \cdot x) = g \cdot f(x)$
identically on $G \times X$.

Given a field extension $K/k$ and a $k$-scheme $X$, 
we denote by $X_K$ the $K$-scheme 
$X \times_{\Spec(k)} \Spec(K)$.

\section{Proof of Theorem \ref{thm:gen}}
\label{sec:ptgen}

\noindent
{\sc Proof of (1)}.
Recall the statement: \emph{let $G$ be a smooth 
connected algebraic group, and $X$ a normal 
$G$-variety. Then $X$ admits a covering by 
$G$-stable quasi-projective open subsets.} 

This follows from a result of Benoist asserting
that every normal variety contains only finitely 
many maximal quasi-projective open subsets (see 
\cite[Thm.~9]{Benoist}). An alternative proof,
using methods from Raynaud's work on the 
quasi-projectivity of homogeneous spaces 
(see \cite{Raynaud}), 
is presented in \cite[Sec.~3]{Br17a}. We will 
recall the very beginning of this proof, and then use
a more direct argument adapted from the proof of 
the quasi-projectivity of algebraic groups 
in \cite[Lem.~39.8.7]{SP}. This bypasses the most
technical developments of \cite{Br17a}.

Let $U$ be an open subset of $X$. Since the action
$a : G \times X \to X$ is a smooth morphism
(see e.g. \cite[Lem.~1.68]{Milne}),
$G \cdot U = a(G \times U)$ is open in $X$; 
also, $G \cdot U$ contains $U$ and is clearly $G$-stable. 
Thus, it suffices to show that $G \cdot U$ is 
quasi-projective for any affine $U$.

For this, we may assume that $X = G \cdot U$.
Since $U$ is affine, its complement is the support
of an effective Weil divisor $D$ on $X$.
We now show that $D$ is a Cartier divisor. 
The product $G \times X$ is 
a normal variety, since $G$ is a smooth, 
geometrically integral variety and $X$ is 
a normal variety (use \cite[IV.6.8.5]{EGA}). 
Also, $a$ is smooth and the Weil divisor 
$G \times D = \pr_2^*(D)$ on $G \times X$ 
contains no fiber of $a$ in its support, since 
$X = G \cdot U$. By the Ramanujam-Samuel theorem
(see \cite[IV.21.14.3]{EGA}),
it follows that $G \times D$ is a Cartier divisor.
Thus, $D = (e,  \id_X)^*(G \times D)$ 
is a Cartier divisor as well.

It is shown in \cite[Prop.~3.3]{Br17a} that 
$D$ is ample, by using a version of the
theorem of the square. We will rather construct
from $D$ an ample divisor $E$ on $X$, via a norm
argument taken from \cite[Tag 0BF7]{SP}.

Since $G$ is smooth, there exists a dense open 
subset $V$ of $G$ and an \'etale morphism
$f : V \to \bA^n$, where $n = \dim(G)$ 
(see \cite[IV.17.11.4]{EGA}).
Replacing $V$ with a smaller open subset, we
may assume that $f$ is finite over its image $W$,
a dense open subset of $\bA^n$. In particular, 
$f : V \to W$ is finite and locally free of 
constant degree $d$.

Denote by $\varphi : V \times X \to X$ 
the pull-back of $a$. Then 
$\cO_{V \times X}(\varphi^*(D))$ 
is an invertible sheaf on $V \times X$, equipped
with a section with divisor of zeroes 
$\varphi^*(D)$. Thus, the norm of 
$\cO_{V \times X}(\varphi^*(D))$ with respect
to the finite locally free morphism
$f \times \id_X : V \times X \to W \times X$
is an invertible sheaf $\cM$ on $W \times X$,
equipped with a section whose divisor of zeroes
$E$ satisfies  
$\Supp(E) = (f \times \id_X)(\Supp (\varphi^*(D)))$
(see \cite[II.6.5]{EGA} for the construction and 
properties of the norm).

Since $W$ is open in $\bA^n$, there exists a
invertible sheaf $\cL$ on $X$ such that
$\cM = \pr_X^*(\cL)$; moreover, $\cL \simeq \cM_w$ 
for any $w \in W(k)$, where $\cM_w$ denotes the
pull-back of $\cM$ under the morphism 
$(w,\id_X): X \to W \times X$, $x \mapsto (w,x)$. 
We claim that $\cL$ is ample.

To prove this claim, we may assume that $k$ is
algebraically closed, since $\cL$ is ample
if and only if $\cL_{\bar{k}}$ is ample on $X_{\bar{k}}$
(see \cite[VIII.5.8]{SGA1}). For any $w \in W(k)$, 
we denote by $E_w$ the pull-back of $E$ under 
$(w,\id_X)$. Since $f$ is finite 
and \'etale of degree $d$, we have 
$f^{-1}(w) = \{ g_1, \ldots, g_d \}$
where $g_i = g_i(w) \in V(k)$ for $i = 1, \ldots, d$.
Then  
$\Supp(E_w) = \bigcup_{i=1}^d g_i^{-1} \Supp(D)$
and hence
$X \setminus \Supp(E_w) = \bigcap_{i= 1}^d g_i^{-1} U$ 
is affine. Also, note that $\Supp(E_w)$ is the 
zero set of a section of $\cM_w \simeq \cL$. In view of 
\cite[II.4.5.2]{EGA}, it suffices to show that 
$X$ is the union of the $X \setminus \Supp(E_w)$, 
where $w \in W(k)$. 
Since $X$ is of finite type,
it suffices in turn to check the equality
$X(k) = \bigcup_{w \in W(k)} 
X(k) \setminus \Supp(E_w)$.

Let $x \in X(k)$ and denote by $V_x$ the set of
$g \in V$ such that $x \in g^{-1} U$.
Then $V_x$ is a dense open subset of $V$ as 
$X = G \cdot U$. Thus, there exists $w \in W(k)$ 
such that $f^{-1}(w)$ is contained in 
$V_x$. Then $x \in X \setminus \Supp(E_w)$ 
as desired.
\qed

\medskip

\noindent
{\sc Proof of Theorem \ref{thm:gen}.}
The regular locus $X_{\reg}$ is a dense open 
subset of $X$. We show that $X_{\reg}$
is $G$-stable. Since $G$ is smooth, 
$G \times X_{\reg}$ is regular by \cite[IV.6.8.5]{EGA}.
As the action morphism $a : G \times X \to X$ 
is flat, it sends $G \times X_{\reg}$ to $X_{\reg}$ 
in view of \cite[IV.6.5.2]{EGA}. 

Thus, we may assume that $X$ is regular. In view of (1), 
there exists a dense open quasi-projective subset 
$U \subset X$, stable by the neutral component $G^0$. 
Thus, $U_{k_s}$ is a dense open quasi-projective subset 
of $X_{k_s}$, stable by $G^0_{k_s}$. Moreover, 
$X_{k_s}$ is regular by \cite[IV.6.7.6, IV.6.8.5]{EGA}. 
Also, there are only finitely
many translates $g \cdot U$, where $g \in G(k_s)$
(since $G(k_s)/G^0(k_s)$ is finite). So 
$V = \bigcap_{g \in G(k_s)} g \cdot U_{k_s}$ 
is a dense open subset of $U_{k_s}$, stable by
$G(k_s)$. As $G$ is smooth, $G(k_s)$ is dense
in $G_{k_s}$ and hence $V$ is stable by
$G_{k_s}$; it is also stable by the Galois
group $\cG$, and quasi-projective.

The assertion follows from this by a standard 
argument of Galois descent. More specifically,
since $X$ is of finite type, there exists
a finite Galois extension $K/k$ and an open
subset $W$ of $X_K$ such that $V = W_{k_s}$. 
Then $W$ is quasi-projective 
(see e.g. \cite[Prop.~14.55]{GW}). Also, 
$W$ is stable by $G_K$ and by the Galois group
$\cG_K$ of $K/k$. So the quotient
$W/\cG_K \subset X_K/\cG_K = X$
satisfies the assertion.
\qed

\section{Torsors}
\label{sec:torsors}

Let $G$ be an algebraic group. 
A $G$-\emph{torsor} is a faithfully flat morphism 
of schemes $f : X \to Y$, where $X$ is equipped 
with a $G$-action $a$ such that $f$ is $G$-invariant and 
$G \times X 
\stackrel{\sim}{\longrightarrow}
X \times_Y X$ via $(a,\pr_2)$.
This isomorphism identifies the two projections
$p_1,p_2 : X \times_Y X \to X$ with 
$a,\pr_2: G \times X \to X$.
As a consequence, for any $G$-scheme $Z$, 
we have an isomorphism
\[ f^* : \Hom(Y,Z) \stackrel{\sim}{\longrightarrow}
\Hom^G(X,Z), \quad 
\varphi \longmapsto \varphi \circ f, \]
where the right-hand side denotes the set of 
$G$-invariant morphisms. 

We now record three auxiliary results:

\begin{lemma}\label{lem:tnormal}
Let $G$ be an algebraic group, and
$f : X \to Y$ a $G$-torsor, where $X$ is
a normal variety. Then $Y$ is a normal variety. 
\end{lemma}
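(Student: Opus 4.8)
The plan is to reduce the normality of $Y$ to that of $X$ using the fact that $f : X \to Y$ is faithfully flat (hence surjective and, for torsors under a finite-type group, open and of finite type). First I would observe that $Y$ is irreducible: $X$ is irreducible and $f$ is surjective, so $Y = f(X)$ is irreducible. Then I would check $Y$ is reduced: faithful flatness means $\cO_Y \to f_*\cO_X$ is injective (fppf descent of the zero section, or simply that $\cO_{Y,f(x)} \to \cO_{X,x}$ is faithfully flat hence injective for a suitable $x$ over each point), and a subring of a reduced ring is reduced. Combined, $Y$ is a variety.

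For normality, the key point is that $f$ is a smooth morphism, or at least flat with geometrically reduced (indeed geometrically normal) fibers isomorphic to $G$ itself after base change. The cleanest route: normality is equivalent to the Serre conditions $(R_1)$ and $(S_2)$, and both descend along a faithfully flat morphism whose fibers satisfy the corresponding conditions. Concretely, for $y \in Y$ pick $x \in X$ with $f(x) = y$; then $\cO_{Y,y} \to \cO_{X,x}$ is faithfully flat, and the fiber ring $\cO_{X,x} \otimes_{\cO_{Y,y}} \kappa(y)$ is a localization of $G_{\kappa(y)}$, which is normal because $G$ is smooth (so geometrically normal) over $k$. I would then invoke the standard descent statement \cite[IV.6.5.2, IV.6.8.5]{EGA} (the same circle of results already used in the excerpt): if $A \to B$ is faithfully flat local and both the base... no — precisely, if $B$ is normal and $A \to B$ is faithfully flat, then $A$ is normal; this is \cite[Tag 033G]{SP} or \cite[IV.6.5.4(ii)]{EGA}. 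Applying this with $A = \cO_{Y,y}$, $B = \cO_{X,x}$ gives that $\cO_{Y,y}$ is normal for every $y$, hence $Y$ is normal.

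Alternatively, and perhaps more in keeping with the paper's style, I would argue via an étale-local trivialization combined with faithfully flat descent. After base change along $f$ itself the torsor becomes trivial: $X \times_Y X \simeq G \times X$, which is normal since $G$ is smooth and $X$ is normal (this is exactly the normality-of-$G \times X$ fact invoked in the proof of (1) in Section \ref{sec:ptgen}). So $p_2 : X \times_Y X \to X$ has normal source, and the diagonal-type descent for the property "normal" along the faithfully flat morphism $f$ — namely that a scheme $Y$ is normal as soon as there is a faithfully flat $Y' \to Y$ with $Y'$ normal — yields the claim.

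The main obstacle is making sure the fiberwise hypothesis is exactly what the descent theorem requires: one needs not just that $X$ is normal but that the fibers of $f$ are geometrically normal, which is where smoothness of $G$ (equivalently, $G_{\bar k}$ reduced, hence regular, hence normal) enters decisively; without it $Y$ need not be normal even if $X$ is. I would be careful to cite the version of faithfully flat descent of normality that does not require $f$ to be of finite type on the nose (torsors here are fppf but $G$ is of finite type by our standing conventions, so this is automatic), and to handle the reducedness/irreducibility of $Y$ first so that "normal variety" — integral, plus normal local rings — is justified in full.
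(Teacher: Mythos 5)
Your proof is correct and is essentially the paper's: the paper simply cites \cite[IV.6.5.4]{EGA} for descent of normality along the faithfully flat $f$, and gets irreducibility from surjectivity of $f$, exactly as in your first route. One correction, though: your closing paragraph asserts that one needs the fibers of $f$ to be geometrically normal and that smoothness of $G$ ``enters decisively'' --- this is false for the \emph{downward} direction of descent (total space normal $\Rightarrow$ base normal), which holds for any faithfully flat morphism of noetherian schemes with no hypothesis on the fibers; fiber conditions are only needed for the \emph{ascent} direction. Indeed, the lemma does not assume $G$ smooth, and the correctly cited statement \cite[IV.6.5.4(ii)]{EGA} (or Tag 033G) that you actually apply requires nothing about the fibers, so your proof goes through; only the surrounding commentary (and the alternative route via $G \times X$, which does use smoothness of $G$) should be discarded.
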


\begin{proof}
Since $X$ is normal and $f$ is faithfully flat, 
$Y$ is normal by \cite[IV.6.5.4]{EGA}.
Also, $Y$ is irreducible, as $X$ is irreducible 
and $f$ is surjective. 
\end{proof}

\begin{lemma}\label{lem:taction}
Let 
$1 \longrightarrow N \longrightarrow G
\stackrel{\pi}{\longrightarrow} Q 
\longrightarrow 1$
be an exact sequence of algebraic groups.
Let $X$ be a $G$-scheme, and $f : X \to Y$
an $N$-torsor. 

\begin{enumerate}

\item[{\rm (i)}]
There is a unique action of $Q$ on $Y$ 
such that the diagram
\[ \xymatrix{
G \times X \ar[r]^-{a} 
\ar[d]_{\pi \times f} 
& X \ar[d]^{f} \\
Q \times Y \ar[r]^-{b} & Y 
}\]
commutes.

\item[{\rm (ii)}]
For any $Q$-scheme $Z$
viewed as a $G$-scheme via $\pi$, 
the composition with $f$ yields an isomorphism
$\Hom^Q(Y,Z) 
\stackrel{\sim}{\longrightarrow}
\Hom^G(X,Z)$.

\end{enumerate}

\end{lemma}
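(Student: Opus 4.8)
The plan is to construct the $Q$-action $b$ on $Y$ directly from the data of the $N$-torsor $f : X \to Y$ and the $G$-action $a$ on $X$, using the universal property of torsors recalled just before the lemma. The key point is that $f$ identifies morphisms out of $Y$ with $N$-invariant morphisms out of $X$; more precisely, since $f$ is an $N$-torsor we have, for any scheme $Z$, a bijection $f^* : \Hom(Y,Z) \xrightarrow{\sim} \Hom^N(X,Z)$, and this extends relatively: for any scheme $T$, the morphism $\id_T \times f : T \times X \to T \times Y$ is again an $N$-torsor (base change of a torsor is a torsor), so $(\id_T \times f)^* : \Hom(T \times Y, Z) \xrightarrow{\sim} \Hom^N(T \times X, Z)$ is a bijection, where $N$ acts on $T \times X$ through the second factor.

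For part (i), first I would observe that the composite $f \circ a : G \times X \to Y$ is invariant under the action of $N$ on $G \times X$ given by $n \cdot (g,x) = (gn^{-1}, n \cdot x)$: indeed $f(a(gn^{-1}, n\cdot x)) = f(g n^{-1} n \cdot x) = f(g \cdot x) = f(a(g,x))$. Now $G \times X \to Q \times X$ via $\pi \times \id$ is an $N$-torsor (it is the base change of $\pi : G \to Q$, an $N$-torsor, along $Q \times X \to Q$, noting $G \times_Q (Q \times X) = G \times X$), but the $N$-action here is $n \cdot (g,x) = (gn^{-1}, x)$, not quite the one above. To fix this, I would instead factor things through $Y$ differently: consider $\pi \times f : G \times X \to Q \times Y$. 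I claim this is an $N$-torsor for the diagonal-type $N$-action $n\cdot(g,x) = (gn^{-1}, n\cdot x)$ on $G \times X$. One way to see this cleanly: the map $G \times X \to Q \times Y$ factors as $G \times X \xrightarrow{\sim} G \times X$, $(g,x) \mapsto (g, g^{-1}\cdot x)$ — wait, that uses a section; better is to argue locally on $Y$ or simply verify the torsor axioms directly, since $\pi \times f$ is faithfully flat (product of faithfully flat maps) and the natural map $N \times (G \times X) \to (G \times X) \times_{Q \times Y} (G \times X)$ is an isomorphism because the fiber of $\pi \times f$ over a point is a single $N$-orbit for the twisted action. Granting this, $f \circ a : G \times X \to Y$ is $N$-invariant for the twisted action (shown above), hence factors uniquely as $b \circ (\pi \times f)$ for a unique morphism $b : Q \times Y \to Y$, which is the commuting square. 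That $b$ is a group action (associativity and unit) then follows by a diagram chase, using the corresponding identities for $a$ together with the uniqueness in the factorizations — e.g. both $b \circ (m_Q \times \id_Y)$ and $b \circ (\id_Q \times b)$ pull back along the appropriate torsor $Q \times G \times X \to Q \times Q \times Y$ to the same map (namely the one coming from associativity of $a$), hence agree. Uniqueness of $b$ is built into the construction.

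For part (ii), let $Z$ be a $Q$-scheme, viewed as a $G$-scheme via $\pi$. Composition with $f$ gives a map $\Hom^Q(Y,Z) \to \Hom^G(X,Z)$: if $\varphi : Y \to Z$ is $Q$-equivariant then $\varphi \circ f$ is $G$-equivariant by the commuting square in (i) and $G$-equivariance of $Z \to Z$. For injectivity and surjectivity, I would use the torsor bijection $f^* : \Hom(Y,Z) \xrightarrow{\sim} \Hom^N(X,Z)$. A morphism $\psi : X \to Z$ lies in $\Hom^G(X,Z)$ iff it is $G$-equivariant, which in particular forces it to be $N$-invariant (as $N$ acts trivially on $Z$), so $\psi = \varphi \circ f$ for a unique $\varphi : Y \to Z$; it remains to check that $\psi$ being $G$-equivariant is equivalent to $\varphi$ being $Q$-equivariant. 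This is again a diagram chase: $G$-equivariance of $\varphi \circ f$ says $a_Z \circ (\id_G \times (\varphi f)) = (\varphi f) \circ a$ as maps $G \times X \to Z$, where $a_Z$ denotes the $G$-action on $Z$ through $\pi$; rewriting the right side via (i) as $\varphi \circ b \circ (\pi \times f)$ and the left side, using that $a_Z$ factors through $\pi \times \id_Z$ and the definition of the $Q$-action on $Z$, as $b_Z \circ (\pi \times \id_Z) \circ (\id_G \times \varphi f) = b_Z \circ (\id_Q \times \varphi) \circ (\pi \times f)$, and then invoking that $\pi \times f : G \times X \to Q \times Y$ is an epimorphism (being faithfully flat), we conclude that $G$-equivariance of $\varphi \circ f$ is equivalent to $b_Z \circ (\id_Q \times \varphi) = \varphi \circ b$, i.e. $Q$-equivariance of $\varphi$.

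The main obstacle I anticipate is the claim that $\pi \times f : G \times X \to Q \times Y$ is an $N$-torsor for the correct (twisted) $N$-action; once that is in place, everything else is formal manipulation with the universal property of torsors. An alternative that sidesteps this is to note that $\pi \times \id_X : G \times X \to Q \times X$ is manifestly an $N$-torsor (base change of $\pi$), and that $f \circ a : G \times X \to Y$ descends along it because $f \circ a$ is constant on the fibers — but this requires checking the fibers of $\pi \times \id_X$ are precisely the level sets of $f \circ a$, equivalently that $f(a(gn^{-1}, x)) = f(a(g,x))$, which reads $f(g n^{-1} \cdot x) = f(g \cdot x)$; this is \emph{not} automatic and is in fact false in general, so one really does need the twisted action, and hence the twisted-torsor claim. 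I would prove that claim by working étale-locally on $Y$: after an fppf (or étale, if $N$ is smooth) base change trivializing $f$, we have $X \cong N \times Y$ $N$-equivariantly, and then $G \times X \cong G \times N \times Y \cong G \times Y$ (via $(g,n) \mapsto gn$) carries the twisted $N$-action to translation on the $G$-factor, exhibiting $\pi \times f$ as the base change of $\pi : G \to Q$ to $Q \times Y$, which is an $N$-torsor; since being an $N$-torsor is fppf-local on the base $Q \times Y$, we are done.
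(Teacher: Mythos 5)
There is a genuine gap, and it sits exactly where you predicted: the claim that $\pi \times f : G \times X \to Q \times Y$ is an $N$-torsor for the twisted action $n \cdot (g,x) = (gn^{-1}, n\cdot x)$ is false. A dimension (or degree) count already rules it out: the fibres of $\pi \times f$ have relative dimension $2\dim N$, not $\dim N$; concretely, after trivializing $f$ so that $X \simeq N \times Y$, the fibre of $\pi \times f$ over $(\bar h, y)$ is $\pi^{-1}(\bar h) \times N \times \{y\}$, a torsor under $N \times N$, not under $N$. The error in your local computation is the step ``$G \times N \simeq G$ via $(g,n) \mapsto gn$'': that map is not an isomorphism but is itself an $N$-torsor over $G$ (the correct identification is $G \times N \simeq G \times_Q G$, $(g,n)\mapsto (g,gn)$). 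Since you state that once this torsor claim is in place everything else is formal, the construction of $b$ in part (i) — and hence everything downstream, including part (ii) — does not go through as written.

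Compounding this, you dismiss the repair that actually works. You assert that $f(g n^{-1}\cdot x) = f(g\cdot x)$ ``is not automatic and is in fact false in general,'' but it holds precisely because $N = \ker\pi$ is \emph{normal} in $G$: $g n^{-1}\cdot x = (g n^{-1} g^{-1})\cdot(g\cdot x)$ with $g n^{-1} g^{-1} \in N$, and $f$ is $N$-invariant. Normality is the one hypothesis your invariance computation never uses, which is a warning sign. The paper exploits it in a two-step descent: first $f \circ a$ is invariant under $n\cdot(g,x)=(g,n\cdot x)$ (by normality), so it descends along the $N$-torsor $\id_G\times f: G\times X\to G\times Y$ to an action $a': G\times Y\to Y$ on which $N$ acts trivially; then $a'$ descends along the $N$-torsor $\pi\times\id_Y: G\times Y\to Q\times Y$ to the desired $b$. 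Your one-step scheme can be salvaged by observing that $\pi\times f$ is an $(N\times N)$-torsor for $(n_1,n_2)\cdot(g,x)=(gn_1^{-1},n_2\cdot x)$ and that $f\circ a$ is invariant under this larger group (again using normality), but that is not what you wrote. Your formal manipulations in part (ii) using the genuine bijection $f^*:\Hom(Y,Z)\to\Hom^N(X,Z)$ and faithful flatness of $\pi\times f$ are fine, but they presuppose the existence of $b$ from part (i).
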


\begin{proof}
(i) As $N$ is a normal subgroup of $G$,
and $f$ is $N$-invariant,
the composition 
\[ G \times X \stackrel{a}{\longrightarrow}
X \stackrel{f}{\longrightarrow} Y \]
is invariant under the action of $N$ on
$G \times X$ via $n \cdot (g,x) = (g,n \cdot x)$.
Since the morphism
$\id_G \times f: G \times X \to G \times Y$
is an $N$-torsor, it follows that there exists
a unique morphism 
$a' : G \times Y \to Y$ 
such that the diagram
\[ \xymatrix{
G \times X \ar[r]^-{a} 
\ar[d]_{\id_G \times f} 
& X \ar[d]^{f} \\
G \times Y \ar[r]^-{a'} & Y 
}\]
commutes. So this diagram is cartesian (e.g.,
since its vertical arrows are $N$-torsors).
Using the uniqueness property for 
descent of morphisms, one may check that $a'$ 
is a $G$-action. Moreover, $N$ acts trivially
on $Y$ via $a$, and hence via $a'$ by 
uniqueness again. Thus, $a'$ is invariant
under the $N$-action on $G \times Y$ via
$n \cdot (g,y) = (g n^{-1},y)$. Since
$\pi \times \id_Y : G \times Y \to Q \times Y$
is an $N$-torsor, it follows that $a'$ factors
uniquely through a morphism 
$b : Q \times Y \to Y$. One may then check
as above that $b$ is a $Q$-action.

(ii) This follows from the isomorphism
\[ f^ * : \Hom(Y,Z) \stackrel{\sim}{\longrightarrow}
\Hom^N(X,Z) \]
by taking invariants of $G$, or equivalently
of $Q$.
\end{proof}

We also record a variant of \cite[Prop.~4.2.4]{Br17b}:

\begin{lemma}\label{lem:tabelian}
Let $f : X \to Y$ be a torsor under an abelian 
variety $A$, where $X$ is smooth. 
Then there exists an $A$-equivariant morphism
$\psi : X \to A/A[n]$ for some positive 
integer $n$, where $A[n]$ denotes the schematic 
kernel of the multiplication by $n$ in $A$.
\end{lemma}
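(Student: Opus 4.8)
The plan is to reduce the statement to a question about line bundles on $X$ and their dependence under $A$-translation, and then apply the theorem of the square. First I would observe that since $f : X \to Y$ is an $A$-torsor with $X$ smooth, the variety $Y$ is also smooth (it is a quotient of a smooth scheme by a free action, or one uses faithfully flat descent as in Lemma \ref{lem:tnormal} plus smoothness of $f$). Pick any ample line bundle $\cL$ on $Y$ and pull it back to an $A$-invariant ample line bundle $M_0 = f^*(\cL)$ on $X$; here invariance means $t_a^* M_0 \simeq M_0$ for all $a \in A$ (functorially), since $f \circ t_a = f$. The point is to replace the torsor $X$ by a more canonical one, namely one whose structure group is a finite subgroup scheme $A[n]$, using an equivariant morphism to $A/A[n]$.

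Next I would bring in the theorem of the square. The natural candidate for $\psi$ comes from choosing an arbitrary line bundle $N$ on $X$ (not necessarily $A$-invariant) and forming the map $\phi_N : A \to \Pic(X)$, $a \mapsto t_a^* N \otimes N^{-1}$; by the theorem of the square this is a homomorphism of groups (on points, and in fact a morphism $A \to \Pic^0$ of the relevant component), so it factors through some $A[n]$ once we know its image is finite — but $\Pic(X)$ need not be an abelian variety, so the cleaner route is: since $X$ is a torsor under $A$, translation by $A$ on $X$ induces, via pullback on $\Pic$, an action of $A$ on $\Pic(X)$; the subgroup of $A$ fixing the class of a fixed ample $N$ is a closed subgroup scheme, and one checks it contains $A[n]$ for suitable $n$ (this is exactly the content of \cite[Prop.~4.2.4]{Br17b}, whose variant we are proving). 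Concretely, choosing $N$ to be a relatively ample bundle for $f$, the translates $t_a^* N$ over $a \in A$ form a family parametrized by $A$; descending this family along $A \to A/A[n]$ is possible precisely when $N$ is $A[n]$-linearizable, and then the linear system of a high power gives an $A/A[n]$-equivariant embedding-type morphism $X \to \bP(\text{something})$ — but I would instead directly produce the map to $A/A[n]$ as follows.

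The cleanest construction: $Y$ is smooth projective (it's quasi-projective being a quotient, and of finite type; and it is proper when $X$ is — but we don't even need properness), so $\Pic(Y)$ contains an ample class $\lambda$. Its pullback to $X$ is $A$-invariant. Now take any class $\mu \in \Pic(X)$ that is relatively ample for $f$; then $\phi_\mu : A \to \Pic(X)$, $a \mapsto t_a^*\mu \otimes \mu^{-1}$, is a homomorphism by the theorem of the square, and it lands in the kernel of pushforward to $\Pic(Y)$ (since all $t_a^*\mu$ have the same image... actually one argues $\phi_\mu$ factors through a subgroup of $\Pic(X)$ that is of finite type, hence $\ker \phi_\mu$ is a subgroup scheme of finite index in $A$, hence contains $A[n]$ for some $n>0$). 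Then $\mu$ is $A[n]$-invariant, so the $A[n]$-torsor $X \to X/A[n] =: X'$ (which exists because $A[n]$ is finite) has the property that $\mu$ descends to an ample bundle on $X'$; thus $X'$ is quasi-projective, in fact projective, and carries a free action of $A/A[n]$ making $X' \to Y$ an $A/A[n]$-torsor. Since $A/A[n]$ is again an abelian variety acting freely and transitively on fibers, $X'$ is — étale-locally on $Y$ — isomorphic to $A/A[n]$; to get the desired morphism $\psi : X \to A/A[n]$ I would pick a $k$-point or, if $X'(k)$ may be empty, first base change to trivialize (but the statement does not require $\psi$ to be defined over $k$ a priori — re-reading, it does).

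The main obstacle I anticipate is exactly this last rational-point issue together with making everything work over a non-algebraically-closed field: over $\kb$ the torsor $X'_{\kb} \to Y_{\kb}$ under $A_{\kb}/A[n]_{\kb}$ need not be trivial (torsors under abelian varieties are classified by $H^1(Y, A/A[n])$, which is generally nonzero), so there is in general \emph{no} equivariant morphism $X \to A/A[n]$ unless we use that $n$ can be increased. The resolution — and I expect this is the crux of the cited \cite[Prop.~4.2.4]{Br17b} — is that for the torsor class $[X] \in H^1(Y,A)$, multiplication by $n$ kills a well-chosen piece, or rather: the composite $X \to X/A[n]$ realizes $X$ as an $A[n]$-torsor over $X' = X/A[n]$, and it is \emph{this} torsor structure, not the structure over $Y$, that we exploit; the map $\psi$ is then built from the $A[n]$-linearization of a bundle by taking the induced map to the classifying object, concretely $X \to \underline{\Hom}(A[n], \bG_m)^\vee$-type construction, landing in $A/A[n]$ via the canonical pairing once one identifies the relevant dual. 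I would present this by following the structure of the proof of \cite[Prop.~4.2.4]{Br17b} and indicating only the modification needed to drop the hypothesis present there (presumably completeness of $X$, replaced here by smoothness together with the torsor hypothesis, which forces $X$ to be "locally like $A$" and hence well-behaved).
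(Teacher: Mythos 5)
There is a genuine gap: you never actually construct $\psi$, and the mechanism you propose for producing it is incorrect. The paper's proof first reduces to the case where $X$ is a variety, then passes to the generic fiber $X \times_Y \Spec\,k(Y)$, which is a torsor under $A_{k(Y)}$ over the \emph{field} $k(Y)$, and invokes the fact that such a torsor admits an $A_{k(Y)}$-equivariant morphism to $(A/A[n])_{k(Y)}$ for some $n$ (equivalently, its class in the Weil--Ch\^atelet group $H^1(k(Y),A)$ is torsion); this yields a rational $A$-equivariant map $X \dasharrow A/A[n]$, which is everywhere defined by Weil's extension theorem because $X$ is smooth and $A/A[n]$ is a proper group scheme. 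This is exactly the step your proposal lacks: you correctly observe that an equivariant morphism $X \to A/A[n]$ amounts to the triviality of the induced $A/A[n]$-torsor $X/A[n] \to Y$ and that this is the crux, but your proposed resolution (``the map $\psi$ is then built from the $A[n]$-linearization of a bundle by taking the induced map to the classifying object\dots'') is not an argument, and you end by deferring to the very reference whose statement is being proved.

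There are also concrete false steps along the way. The claim that the stabilizer in $A$ of the class of an ample line bundle on $X$ contains $A[n]$ for suitable $n$ is wrong: already for the trivial torsor $X=A$ over a point, that stabilizer is the finite group scheme $K(N)=\ker \phi_N$, which contains no $A[n]$ with $n\geq 2$; and ``the image of $\phi_\mu$ lies in a group of finite type'' does not make $\ker\phi_\mu$ of finite index. You also repeatedly assume $Y$ is quasi-projective (even projective) in order to have ample bundles available; the lemma makes no such hypothesis, and being a quotient does not confer quasi-projectivity. Finally, ``picking a $k$-point'' cannot trivialize the torsor $X' \to Y$ --- one needs a section over all of $Y$ --- and the reduction from a smooth scheme $X$ to a smooth variety (handled in the paper by rescaling the $\psi_i$ on the components) is skipped entirely.
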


\begin{proof}
We first reduce to the case where $X$ is
a variety. The irreducible components 
$X_1,\ldots,X_r$ of $X$ are disjoint and
stable by $A$. If there exist 
$A$-equivariant morphisms 
$\psi_i : X_i \to A/A[n_i]$ for $i = 1,\ldots,r$,
then we obtain the desired morphism
$\psi$ by setting $n = \prod_{i=1}^r n_i$ and 
$\psi \vert_{X_i} = \frac{n}{n_i} \psi_i$
for all $i$.

Therefore, we may assume that $X$ is a variety;
then so is $Y$. The base change
$X \times_Y \Spec \, k(Y)$
is an $A_{k(Y)}$-torsor over $k(Y)$. 
By \cite[Lem.~8.22]{Milne}, there exists 
an $A_{k(Y)}$-equivariant morphism 
\[ \varphi : X \times_Y \Spec \, k(Y) 
\longrightarrow 
A_{k(Y)}/A_{k(Y)}[n] = (A/A[n])_{k(Y)} \]
for some positive integer $n$. Composing
$\varphi$ with the natural morphisms
\[ \Spec \, k(X) \longrightarrow 
X \times_Y \Spec \, k(Y),
 \quad 
(A/A[n])_{k(Y)} \longrightarrow A/A[n] \] 
yields a rational $A$-equivariant map
$\psi : X \dasharrow A/A[n]$. Since
$X$ is smooth, $\psi$ is a morphism
by Weil's extension theorem
(see \cite[Thm.~4.4/1]{BLR}).
 \end{proof}

\section{Associated fiber bundles}
\label{sec:afb}

Consider an algebraic group $G$, a subgroup 
$H$, and the corresponding homogeneous space 
$G/H$ with quotient morphism $q: G \to G/H$ and base
point $o = q(e)$. Then $q$ is an $H$-torsor 
(see e.g. \cite[5.c)]{Milne}).
Given an $H$-scheme $Y$, the 
\emph{associated fiber bundle} is a scheme
$Z$ which fits in a cartesian square
\[ \xymatrix{
G \times Y \ar[r]^-{\pr_1} \ar[d]_{\varphi} 
& G \ar[d]^q \\
Z \ar[r]^-{\psi} & G/H,
}\]
where $\varphi$ is invariant under the action
of $H$ on $G \times Y$ via 
\[ h \cdot (g,y) = (g h^{-1}, h \cdot y).  \]

If such a scheme $Z$ exists, then $\varphi$ 
is an $H$-torsor (the pull-back of the $H$-torsor
$q$ under $\psi$) and $\psi$ is faithfully flat 
(since so are $\varphi$, $q$ and $\pr_1$). 
Also, $G$ acts on $G \times Y$ via left 
multiplication on itself; 
this action commutes with the $H$-action, and 
$\pr_1$ is equivariant. By Lemma \ref{lem:taction}, 
this yields a $G$-action on $Z$ such that $\psi$ 
is equivariant. In particular, the fiber
$Z_o$ is an $H$-scheme, equivariantly
isomorphic to $Y$. We denote $Z$ by $G \times^H Y$.

Conversely, if $Z$ is a $G$-scheme and 
$\psi : Z \to G/H$ a $G$-equivariant morphism 
such that $Z_o \simeq Y$ as $H$-schemes, 
then one may easily check that $Z \simeq G \times^H Y$ 
(see \cite[\S 2.5]{Br17a}).

The associated fiber bundle need not exist, 
as shown again by Hironaka's example (see
\cite{BB} for details).  
But for any $G$-scheme $Y$, 
the associated fiber bundle $G \times^H Y$ 
exists and is isomorphic to $G/H \times Y$, 
where $G$ acts diagonally; this identifies 
$\psi$ with the first projection. 

Another instance where $G \times^H Y$ exists
is when $Y$ is $H$-\emph{quasi-projective}, 
i.e., it admits an ample $H$-linearized
line bundle; then $G\times ^H Y$ is
quasi-projective (this follows from 
\cite[Prop.~7.1]{MFK} or from 
\cite[10.2/6]{BLR}).

We now obtain a further existence result:

\begin{thm}\label{thm:afb}
Let $G$ be a smooth algebraic group, 
$H$ a subgroup, and $Y$ a normal quasi-projective
$H$-variety. Then the associated fiber bundle
$G \times^H Y$ exists and is a normal 
quasi-projective $G$-variety.
\end{thm}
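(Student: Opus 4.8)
The plan is to reduce to the known case where the fiber bundle is built from a globally linearized bundle, and the reduction will go through an auxiliary (possibly non-normal) affine-over-$G/H$ model followed by normalization. First I would use quasi-projectivity of $Y$ to choose an $H$-equivariant locally closed immersion $Y \hookrightarrow \bP(V)$ for some finite-dimensional $H$-module $V$; concretely, $Y$ carries an ample $H$-linearized line bundle $\cL$, and a suitable tensor power is very ample, so a space of $H$-invariant sections of $\cL^{\otimes m}$ gives such an embedding. Passing to the $H$-stable projective closure $\Yb$ of $Y$ in $\bP(V)$, which is $H$-quasi-projective (indeed $H$-projective), the associated bundle $G \times^H \Yb$ exists and is quasi-projective by the cited consequence of \cite[Prop.~7.1]{MFK}. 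Then $G \times^H Y$ is the open subset of $G \times^H \Yb$ corresponding to $Y \subset \Yb$: openness and the cartesian-square property are inherited because $\varphi$ in the defining square is an $H$-torsor, hence faithfully flat, so the open locus pulls back correctly. Concretely one sets $G\times^H Y := \varphi\bigl((G\times Y)\bigr)$ as an open subscheme, and checks the square for $Y$ is cartesian by descent along the $H$-torsor $\pr_1 : G \times Y \to G$ restricted appropriately. This already gives existence of $G \times^H Y$ as a quasi-projective $G$-scheme.

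Next I would establish normality and integrality of $Z := G \times^H Y$. Since $G$ is smooth (hence geometrically reduced and geometrically irreducible), and $Y$ is a normal variety, the product $G \times Y$ is a normal variety by \cite[IV.6.8.5]{EGA}. The morphism $\varphi : G \times Y \to Z$ is an $H$-torsor, in particular faithfully flat, so by Lemma \ref{lem:tnormal} the scheme $Z$ is a normal variety. (One should note $Z$ is of finite type and separated: finite type because it is covered by affines over which $\varphi$ is, say, fppf and finite type; separatedness follows since $Z \to G/H$ is separated — being, locally on the base, a twist of the separated $H$-scheme $Y$ — and $G/H$ is separated.) Thus $Z$ is a normal variety equipped, via Lemma \ref{lem:taction} applied to the left-multiplication $G$-action on $G \times Y$ commuting with the $H$-action, with a $G$-action making $\psi : Z \to G/H$ equivariant, and with fiber $Z_o \cong Y$ as $H$-schemes — so $Z = G \times^H Y$ as claimed.

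The remaining point is quasi-projectivity of $Z$ as a $G$-variety, and this is the step I expect to be the main obstacle. Two routes are available. The first, and cleaner, is to run the construction directly: the $H$-equivariant embedding $Y \hookrightarrow \bP(V)$ yields a $G$-equivariant embedding $G \times^H Y \hookrightarrow G \times^H \bP(V)$, and $G \times^H \bP(V) = \bP(G \times^H V)$ where $G \times^H V$ is a $G$-linearized vector bundle on $G/H$ — but here one must know $G/H$ itself is quasi-projective, which need not hold for non-connected $G$ (Hironaka!). So this route is not available in general, and one cannot expect $\psi : Z \to G/H$ to help globally. The second route — which is the one I would actually pursue — avoids $G/H$ entirely and instead mimics the \emph{norm argument} from the proof of (1) in Section \ref{sec:ptgen}: since $Z$ is normal, it suffices (by \cite[II.4.5.2]{EGA}) to produce an effective Cartier divisor $E$ on $Z$ whose complement is affine, or rather to cover $Z$ by affine opens of the form $Z \setminus \Supp(E_w)$ for a family of sections of a single line bundle $\cL$ on $Z$. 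Start from the ample $H$-linearized $\cL_0$ on $Y$; it has an $H$-stable zero locus $D_0 \subset Y$ with affine complement, and $D_0$ spreads out along $\varphi$ to a $G$-stable-up-to-the-$H$-action divisor on $G\times Y$; descending along the $H$-torsor $\varphi$ gives a Cartier divisor $D$ on $Z$ (Cartierness descends since $\varphi$ is faithfully flat and $D_0$ is Cartier). Using an étale chart $f : V \to W \subset \bA^n$ of $G$ as in Section \ref{sec:ptgen} and taking the norm of $\cO(\varphi^*D)$ along $f \times \id$, one produces an invertible sheaf on $Z$ together with a $G(k)$-indexed family of sections whose complements $\bigcap_i g_i^{-1}(Z \setminus \Supp D)$ are finite intersections of translates of an affine open, hence affine, and which cover $Z$ because $Z = G \cdot (Z \setminus \Supp D)$ (the latter since $Y \subset Z_o$ is open and $G$ acts transitively on the fibers over $G/H$-points). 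This gives an ample line bundle on $Z$ after reduction to $\kb$ and Galois descent, exactly as in Section \ref{sec:ptgen}, completing the proof. The delicate bookkeeping is checking that the divisor $D_0 \subset Y$ can be taken $H$-stable with affine complement and that its descent behaves well — but $Y$ affine-minus-ample-section data is the same mechanism already used, so no genuinely new difficulty arises beyond carrying the $H$-equivariance through the descent.
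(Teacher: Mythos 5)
There is a genuine gap, and it sits in your very first step: from ``$Y$ is a normal quasi-projective $H$-variety'' you conclude that $Y$ carries an \emph{ample $H$-linearized} line bundle, hence an $H$-equivariant immersion $Y \hookrightarrow \bP(V)$ into the projectivization of an $H$-module. This is false for a general subgroup $H$ of $G$, and it is precisely the difficulty the theorem is about. The cited linearization results ([Br17a, Lem.~2.9, Cor.~2.14]) apply only when the acting group is affine (connected). If $H = A$ is an abelian variety acting on $Y = A$ by translation, no ample line bundle on $A$ is $A$-linearizable: every finite-dimensional $A$-module is trivial, so an $A$-equivariant immersion $A \hookrightarrow \bP(V)$ would force the transitive translation action to be trivial. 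Since both your existence argument (via the closure $\Yb$ and [MFK, Prop.~7.1]) and your route~1 to quasi-projectivity rest on this embedding, they collapse. Your fallback route~2 does not repair this: the norm argument of Section~\ref{sec:ptgen} proves quasi-projectivity of $G\cdot U$ for an affine open $U$, but inside a fiber of $\psi : Z \to G/H$ over a $k$-point the only available translations are by the stabilizer $H$, so if $D_0 \subset Y$ is $H$-stable then $G\cdot(Z\setminus \Supp D)$ meets the fiber $Z_o \simeq Y$ only in $Y \setminus \Supp D_0 \neq Y$; one does not get $Z = G\cdot(Z\setminus\Supp D)$, and covering $Z$ by several quasi-projective $G$-stable opens would not suffice either. (A minor further point: your worry that $G/H$ may fail to be quasi-projective is unfounded — homogeneous spaces are always quasi-projective by Chow--Raynaud; Hironaka's example concerns non-homogeneous actions.)

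The paper's proof is built exactly to circumvent the linearization failure, by a three-step d\'evissage of $H$. One takes the smallest normal subgroup $N \trianglelefteq H$ with $H/N$ proper; $N$ is affine and connected, so $Y$ \emph{is} $N$-quasi-projective and $G\times^N Y$ exists, is quasi-projective, and is normal by Lemma~\ref{lem:tnormal} — this is the only place where your embedding strategy is legitimate. The proper quotient $H/N$ is then an extension of a finite group $F$ by an abelian variety $A$. The abelian-variety layer is handled by Lemma~\ref{lem:tabelian}: the $A$-torsor $G/N \to G/I$ admits an $A$-equivariant map to $A/A[n]$, which rewrites $G\times^N Y$ as $A \times^{A[n]} Z'$ and reduces the quotient by $A$ to a quotient by the \emph{finite} group scheme $A[n]$; quasi-projectivity then descends along the finite locally free torsor by the norm/descent result [EGA II.6.6.1]. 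The finite layer $F$ is handled the same way. Your proposal contains the correct treatment of normality and of the $G$-action (Lemmas~\ref{lem:tnormal} and~\ref{lem:taction}), but without some version of this d\'evissage the existence and quasi-projectivity claims are unproved.
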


\begin{proof}
By \cite[Thm.~2]{Br17b}, there is a smallest
normal subgroup $N$ of $H$ such that the quotient
$Q = H/N$ is proper; moreover, $N$ is affine and 
connected. Since $Y$ is normal and quasi-projective,
it is $N$-quasi-projective in view of 
\cite[Lem.~2.9]{Br17a}. Thus, $G \times^N Y$ exists 
and is quasi-projective. Also, $G \times Y$ is normal 
(as $G$ is smooth and $Y$ is normal), and hence 
$G \times^N Y$ is normal by Lemma \ref{lem:tnormal}.

The group $G \times H$ acts on $G \times Y$
via $(g,h) \cdot (g',y) = (gg' h^{-1}, h \cdot y)$.
In view of Lemma \ref{lem:taction}, this yields 
an action of $Q$ on $G \times^N Y$ which commutes 
with the $G$-action and such that the canonical
morphism
\[ \psi : G \times^N Y \longrightarrow G/N\]
is equivariant.

We have a unique exact sequence of algebraic groups
\[ 1 \longrightarrow A \longrightarrow Q
 \longrightarrow F \longrightarrow 1, \]
 where $A$ is an abelian variety and $F$ is finite
 (see \cite[Lem.~3.3.7]{Br17b}). Denote by $I$
 the pull-back of $A$ in $H$. Then we obtain two
 exact sequences
 \[ 1 \longrightarrow N \longrightarrow I
 \longrightarrow A \longrightarrow 1, \quad
 1 \longrightarrow I \longrightarrow H
 \longrightarrow F \longrightarrow 1. \]

We now show that the associated fiber bundle
$G \times^I Y$ exists and is quasi-projective.
For this, we consider the natural morphism
$G/N \to G/I$ which is an $A$-torsor
(see e.g. \cite[Prop.~2.8.4]{Br17b}); moreover,
$G/N$ is smooth. 
By Lemma \ref{lem:tabelian},
it follows that there exists an $A$-equivariant
morphism $\varphi : G/N \to A/A[n]$ for some 
positive integer $n$. Composing $\varphi$ with
the $A$-equivariant morphism $\psi$ yields 
an $A$-equivariant morphism 
$G \times^N Y \to A/A[n]$, and hence an 
$A$-equivariant isomorphism
\[ G \times^N Y \simeq A \times^{A[n]} Z, \]
where $Z$ is a closed $A[n]$-stable subscheme
of $G \times^N Y$. In particular, $Z$ is
quasi-projective and its $A[n]$-action is free.

As $A[n]$ is finite, it follows that there is 
an $A[n]$-torsor $f: Z \to W$, where $W$ is 
a scheme (see \cite[III.2.2.3, III.2.6.1]{DG}).
We claim that $W$ is quasi-projective.
Indeed, $Z$ admits an ample $A[n]$-linearized
line bundle $L$ (as follows e.g. from
\cite[Prop.~2.12]{Br17a}). By descent,
we have $L \simeq f^*(M)$ for a line bundle
$M$ on $W$, unique up to isomorphism.
Since $f$ is finite and locally free, 
$M$ is ample in view of \cite[II.6.6.1]{EGA};
this proves the claim.

Next, the projection $\pr_2 : A \times Z \to Z$
lies in a commutative square
\[ 
\xymatrix{ A \times Z \ar[r]^-{\pr_2} \ar[d] 
& Z \ar[d]^f \\
A \times^{A[n]} Z \ar[r]^-{\pi} & W
}\]
where the vertical arrows are $A[n]$-torsors.
Thus, this square is cartesian and the morphism
$\pi : G \times^N Y = A \times^{A[n]} Z \to W$ 
is an $A$-torsor. By Lemma \ref{lem:taction},
it follows that $W$ is equipped with an action 
of $G \times Q/A = G \times F$. 
Since the composition
\[ G \times^N Y \longrightarrow G/N 
\longrightarrow G/I \] 
is $G$-equivariant and $A$-invariant, this yields
a commutative diagram 
\[ 
\xymatrix{ G\times^N Y \ar[r] \ar[d] 
& W \ar[d] \\
G/N \ar[r] & G/I
}\]
where all arrows are $G$-equivariant, and 
the horizontal arrows are $A$-torsors.
As a consequence, this diagram is cartesian,
so that $W = G \times^I Y$; also, 
the $F$-action on $W$ is free. Arguing as 
for the construction of $W$,
this yields a cartesian square 
\[ 
\xymatrix{ W \ar[r] \ar[d] 
& V \ar[d] \\
G/I \ar[r] & G/H
}\]
where the horizontal arrows are $F$-torsors
and $V$ is a $G$-scheme. Thus, 
$V$ satisfies the assertions.
\end{proof}

\section{Weil restriction and proof of 
Theorem \ref{thm:eq}}
\label{sec:wr}

Given a morphism of schemes $\psi: Z \to W$,
recall that the Weil restriction functor 
$\bfR_{W/k}(Z/W) = \bfR_W(Z)$ is the contravariant
functor from $k$-schemes (not necessarily 
of finite type) to sets, which sends every
such scheme $S$ to the set of sections of 
the morphism 
$\psi \times \id_S : Z \times S \to W \times S$.
These sections may be identified with the 
morphisms $\sigma : W \times S \to Z$
such that $\psi \circ \sigma = \pr_1$,
and this identifies $\bfR_W(Z)$ with the
functor considered in \cite[7.6]{BLR}.

Next, assume that $Z$ is quasi-projective,
and $W$ is projective. Then in view of
\cite[p.~267]{Grothendieck}, the functor 
$\bfR_W(Z)$ is represented by an open subscheme 
$\R_W(Z)$ of the Hilbert scheme $\Hilb(Z)$.
This is obtained by sending every section to 
its schematic image; recall that the $S$-points
of $\Hilb(Z)$ are the closed subschemes $V$ 
of $Z \times S$ which are flat over $S$, 
and the schematic images of sections
are those $V$ such that 
$V \stackrel{\sim}{\longrightarrow} W \times S$
via $\psi \times \id_S$. 

Since $\Hilb(Z)$ is a disjoint union 
of open and closed subschemes of finite type
(for which the Hilbert polynomial is specified),
$\R_W(Z)$ satisfies the same property. 
Thus, $\Hilb(Z)$ and $\R_W(Z)$ 
are locally of finite type. 

As mentioned in \cite[\S 7.6]{BLR}, Weil restriction 
is compatible with base change. In particular,
for any $w \in W(k)$ with schematic fiber $Z_w$, 
we have a morphism of functors 
$\bfR_W(Z) \to Z_w$
(where $Z_w$ is identified with its functor of points),
and hence a morphism of schemes
\[ \ev_w : \R_W(Z) \longrightarrow Z_w, \quad 
\sigma \longmapsto \sigma(w,-) \]
under the above assumptions on $Z$ and $W$. 

Also, if $Z$ and $W$ are equipped with actions 
of an algebraic group $G$ such that 
$\psi$ is equivariant, then
we obtain compatible $G$-actions on $\R_W(Z)$
and $\Hilb(Z)$. If in addition $w$ is fixed by
a subgroup $H$ of $G$, then $\ev_w$ 
is $H$-equivariant.

We now consider an algebraic group $G$, 
a subgroup $H$ of $G$ and an $H$-scheme $Y$. 
We assume that the associated fiber bundle 
\[ \psi : Z = G \times^H Y \longrightarrow G/H = W \]
exists, and $Z$ is quasi-projective
(by Theorem \ref{thm:afb}, 
this holds if $G$ is smooth and $Y$ is normal and 
quasi-projective).
We assume in addition that the homogeneous
space $G/H$ is proper, or equivalently projective.
Then the preceding discussion yields:

\begin{lemma}\label{lem:rep}
With the above assumptions, the functor
$\bfR_W(Z)$ is represented by an open $G$-stable
subscheme $\R_W(Z)$ of $\Hilb(Z)$. Moreover,
the evaluation morphism
$\ev_o : \R_W(Z) \to Y$ is $H$-equivariant.
\end{lemma}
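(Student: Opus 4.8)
The plan is to assemble Lemma~\ref{lem:rep} from the general facts about Weil restriction recalled just above its statement, checking carefully that the hypotheses are met in the present situation. First I would verify the standing assumptions needed to invoke the representability discussion: the homogeneous space $W = G/H$ is proper (hence projective, since it is a quasi-projective variety by \cite[5.c)]{Milne} or because any proper homogeneous space is projective), and $Z = G \times^H Y$ exists and is quasi-projective by Theorem~\ref{thm:afb} when $G$ is smooth and $Y$ is normal quasi-projective. With $Z$ quasi-projective over the projective scheme $W$, the cited result of Grothendieck (\cite[p.~267]{Grothendieck}) gives that $\bfR_W(Z)$ is represented by an open subscheme $\R_W(Z)$ of $\Hilb(Z)$, obtained by sending each section of $\psi \times \id_S$ to its schematic image. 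This already yields the first half of the statement except for $G$-stability.

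Next I would address the $G$-action. The group $G$ acts on $Z$ (constructed in Section~\ref{sec:afb} via Lemma~\ref{lem:taction}) and on $W = G/H$ by left translation, and $\psi$ is $G$-equivariant by construction. As recalled in the paragraph preceding Lemma~\ref{lem:rep}, these equivariant data induce compatible $G$-actions on $\Hilb(Z)$ and on $\R_W(Z)$: concretely, $g \in G$ sends a closed subscheme $V \subset Z \times S$ to $(g \cdot)(V) \subset Z \times S$, and if $V$ is the schematic image of a section over $w \in W$ then $g \cdot V$ is the schematic image of a section over $g \cdot w$; this shows the subfunctor $\bfR_W(Z) \subset \Hilb(Z)$ is preserved by the $G$-action, i.e.\ $\R_W(Z)$ is a $G$-stable open subscheme. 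I would spell this out at the level of functors of points, since that is the cleanest way to see stability: a point of $\bfR_W(Z)(S)$ is a section $\sigma$ of $\psi \times \id_S$, and $g \cdot \sigma$ defined by $(g \cdot \sigma)(w,s) = g \cdot \sigma(g^{-1} \cdot w, s)$ is again such a section.

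For the final clause, I would use the base-point $o = q(e) \in (G/H)(k)$, which is fixed by the subgroup $H$ of $G$. By the discussion of evaluation morphisms above, for any $k$-rational point $w$ of $W$ there is an evaluation morphism $\ev_w : \R_W(Z) \to Z_w$ obtained from compatibility of Weil restriction with base change, and when $w$ is fixed by a subgroup $H$ this morphism is $H$-equivariant. Applying this with $w = o$, and using the identification $Z_o \simeq Y$ of $H$-schemes noted in Section~\ref{sec:afb}, gives the $H$-equivariant evaluation morphism $\ev_o : \R_W(Z) \to Y$.

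The main (and really the only) obstacle is that the statement is essentially a bookkeeping summary: all the substantive input — representability via Grothendieck, existence and quasi-projectivity of $Z$ via Theorem~\ref{thm:afb}, the $G$-action on $\R_W(Z)$, and the $H$-equivariance of $\ev_o$ — has been set up in the preceding paragraphs. So the proof amounts to checking that the hypotheses ``$Z$ quasi-projective, $W$ projective, $\psi$ equivariant, $o$ fixed by $H$'' are all in force and then citing the relevant facts; the one point worth writing out is the functor-of-points argument that the $G$-action on $\Hilb(Z)$ restricts to $\R_W(Z)$, i.e.\ that translating a section yields a section.
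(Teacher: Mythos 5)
Your proposal is correct and follows exactly the route the paper intends: the paper gives no separate proof of Lemma~\ref{lem:rep}, presenting it as an immediate consequence of the preceding discussion (Grothendieck's representability of $\bfR_W(Z)$ inside $\Hilb(Z)$, the induced $G$-actions when $\psi$ is equivariant, and the $H$-equivariance of $\ev_w$ for $w$ fixed by $H$, applied at $w = o$ with $Z_o \simeq Y$). Your additional functor-of-points verification of $G$-stability is a harmless elaboration of the same argument.
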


Next, let $X$ be a $G$-scheme, and 
$f : X \longrightarrow Y$
an $H$-equivariant morphism. Then
\[ \id_G \times f : G \times X 
\longrightarrow G \times Y \]
is a $G \times H$-equivariant morphism of schemes
over $G$, where $G \times H$ acts 
on $G \times X$ and $G \times Y$ via 
$(g,h) \cdot (g',z) = (g g' h^{-1}, h \cdot z)$.
By Lemma \ref{lem:taction}, this defines 
a $G$-equivariant morphism 
\[ G \times^H f : G \times^H X 
\longrightarrow G \times^H Y = Z \]
of schemes over $W$, which pulls back to
$f : X \to Y$ on fibers at $o$. Also, since 
$X$ is a $G$-scheme, the morphism
\[ G \times X \longrightarrow W \times X,
\quad 
(g,x) \longmapsto (g \cdot o, g \cdot x) \]
factors through a $G$-equivariant isomorphism 
$G \times^H X 
\stackrel{\sim}{\longrightarrow}
W \times X$
of schemes over $W$, which pulls back to
$o \times \id_X$ on fibers at $o$. This defines a 
$G$-equivariant morphism $W \times X \to Z$
of schemes over $W$, which pulls back to
$f$ on fibers at $o$. In turn, this yields
a $G$-equivariant morphism
\[ F : X \longrightarrow \R_W(Z) \]
such that $\ev_o \circ F = f$.

\begin{lemma}\label{lem:imm}
With the above notation, assume that $f$ is 
an immersion. Then $F$ is an immersion as well.
\end{lemma}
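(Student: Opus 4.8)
The plan is to factor the morphism $F: X \to \R_W(Z)$ through the pullback construction and to recognize each intermediate step as an immersion, using that immersions are stable under base change and that the passage to associated fiber bundles and to Weil restriction both respect immersions.

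First I would note that $f: X \to Y$ being an immersion means $X$ is (isomorphic to) a locally closed subscheme of $Y$; since all schemes here are of finite type over $k$, after shrinking we may treat the closed-immersion case and the open-immersion case and then combine, or simply work with the locally closed structure directly. The key is to track $f$ through the chain
\[
X \;\xrightarrow{\ \sim\ }\; \text{(fiber of } G\times^H X \text{ at } o)
\;\hookrightarrow\; \text{(section locus in } \R_W(Z)),
\]
but more usefully through the explicit description given just before the statement: $f$ induces a $G$-equivariant morphism $W \times X \to Z$ of schemes over $W$, obtained by composing the isomorphism $G\times^H X \xrightarrow{\sim} W \times X$ with $G \times^H f: G\times^H X \to Z$. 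So the first step is to show that $G \times^H f: G\times^H X \to G\times^H Y = Z$ is an immersion. This follows because $G \times^H f$ is, étale-locally on the base $W = G/H$, identified with $\id \times f$ after pulling back along the $H$-torsor $q: G \to G/H$: concretely, the pullback of $G\times^H f$ along $q$ is $\id_G \times f: G \times X \to G \times Y$, which is an immersion since $f$ is and immersions are stable under base change; since $q$ is faithfully flat and quasi-compact, and the property of being an immersion is fppf-local on the target (for separated, finite-type morphisms — which holds here), $G\times^H f$ is an immersion.

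Next, composing with the isomorphism $G\times^H X \cong W\times X$, we get that the induced morphism $W \times X \to Z$ over $W$ is an immersion. Now $F: X \to \R_W(Z)$ sends $x$ to the section $w \mapsto$ (image of $(w,x)$ under $W\times X \to Z$); equivalently, $F$ is the composition of $x \mapsto (\{x\}\text{-section})$ with the Hilbert-scheme map induced by the immersion $W\times X \hookrightarrow Z$. The cleanest route is: the immersion $j: W\times X \hookrightarrow Z$ over $W$ realizes $W \times X$ as a locally closed subscheme, hence $\R_W(W\times X) \to \R_W(Z)$ is an immersion (representability and this immersion property for Weil restriction along the projective base $W$ follow from the Hilbert-scheme description — an $S$-point of $\R_W(W\times X)$ is a section of $W\times X \times S \to W\times S$, which via $j\times\id_S$ is a section of $Z\times S \to W\times S$ landing in the locally closed subscheme $(W\times X)\times S$, and this is a locally closed, in fact representable, condition). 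Finally $\R_W(W\times X) = \R_W(\pr_1: W\times X \to W) \cong X$ canonically, since a section of $W \times X \times S \to W\times S$ is the same as a morphism $W\times S \to X$, which (as $W$ is proper, geometrically connected, and reduced, so $\R_W$ of a constant family is the scheme itself) is the same as a morphism $S \to X$; under this identification $F$ is exactly the immersion $\R_W(W\times X) \hookrightarrow \R_W(Z)$. Hence $F$ is an immersion.

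The main obstacle I anticipate is the bookkeeping around \emph{which} form of descent/locality for immersions is available: "immersion" is not in general fppf-local on the target without properness/separatedness hypotheses, so I would either invoke that the morphisms in play are separated of finite type (so the locally closed locus is well-behaved) and quote the relevant statement (e.g. \cite[IV.2.7.1]{EGA} for fppf descent of locally closed immersions, or equivalently descend the closed and open parts separately using that $q$ is faithfully flat quasi-compact), or else avoid descent entirely by arguing directly: one exhibits a $G$-stable open $U \subseteq \R_W(Z)$ with $F(X)$ closed in $U$, by taking $U$ to be the locus of sections whose image lands in the maximal open subscheme of $Z$ on which the immersion $W\times X \to Z$ is a closed immersion. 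Checking that this $U$ is open and that $F(X)$ is closed in it, compatibly with the $G$-action, is the one genuinely technical point; everything else is formal manipulation of the cartesian squares already set up in the excerpt.
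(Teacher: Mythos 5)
Your route is genuinely different from the paper's, and as written it has two concrete gaps. First, the identification $\R_W(W\times X)\cong X$ via constant sections requires $\cO(W)=k$, i.e.\ $W=G/H$ geometrically connected. The lemma is stated and used for an arbitrary smooth $G$, and in the application $H$ is connected, so $W$ is disconnected whenever $G$ is; in that case $\bfR_W(W\times X)$ decomposes according to the components of $W$ and is strictly larger than $X$, so "a morphism $W\times S\to X$ is the same as a morphism $S\to X$" fails. This is repairable (the unit $X\to\R_W(W\times X)$ is still a closed immersion, being a section of the separated morphism $\ev_o$), but the step as stated is wrong. Second, your two preservation claims need real proofs: descent of ``immersion'' along $q:G\to G/H$ is legitimate only because the immersions in play are quasi-compact (which you do flag), and the claim that $\R_W(-)$ carries the immersion $W\times X\hookrightarrow Z$ to an immersion cannot be quoted from \cite[7.6/2]{BLR}, which concerns Weil restriction along a \emph{finite locally free} base, whereas $W=G/H$ is merely proper (e.g.\ an abelian variety). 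One must redo it through the Hilbert-scheme description: openness of the locus of sections landing in an open subscheme follows from properness of $W\times S\to S$, and closedness of the locus of sections landing in a closed subscheme follows from $\Hilb(Z_1)$ being closed in $\Hilb(Z)$. None of this is false, but it is precisely the technical content you would have to supply.

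The paper avoids all of this with a three-line local argument exploiting $\ev_o\circ F=f$: since $f$ is an immersion, the composite $\cO_{Y,f(x)}\to\cO_{\R_W(Z),F(x)}\to\cO_{X,x}$ is surjective, hence so is $\cO_{\R_W(Z),F(x)}\to\cO_{X,x}$, so $F$ is a local immersion; $F$ is injective because $f$ is; and an injective local immersion with irreducible source is an immersion \cite[I.3.4.5]{DG}. Equivalently, $F$ factors as the section $X\to\R_W(Z)\times_Y X$ of the separated projection (a closed immersion) followed by the base change of $f$ along $\ev_o$ (an immersion). If you retain your approach, at minimum replace the $\R_W(W\times X)\cong X$ step by this section argument; but the local-ring proof makes the descent-and-functoriality machinery unnecessary.
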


\begin{proof}
For any $x \in X$, the composition of
homomorphisms of local rings
\[ \cO_{Y,f(x)} \longrightarrow \cO_{\R_W(Z),F(x)}
 \longrightarrow \cO_{X,x} \]
is an isomorphism, and hence the homomorphism
$\cO_{\R_W(Z),F(x)} \to \cO_{X,x}$
is surjective. By \cite[I.3.4.4]{DG}, it follows
that $F$ is a local immersion at $x$. Also,
$F$ is injective, since so is $f$. But
every injective local immersion is an immersion,
since $X$ is irreducible (see \cite[I.3.4.5]{DG}).
\end{proof}

\begin{remark}\label{rem:fun}
We sketch an alternative approach to the
above Weil restriction functor $\bfR_W(Z)$. 
Consider the functor $\bfHom^H(G,Y)$ sending 
every scheme $S$ to the set of $H$-equivariant 
morphisms $f : G \times S \to Y$, where $H$ acts 
on $G \times S$ via right multiplication on $G$.
Then $G$ acts on $\bfHom^H(G,Y)$ via left
multiplication on itself. Moreover, evaluating
at $e$ yields a morphism of functors 
\[ \varepsilon : 
\bfHom^H(G,Y) \longrightarrow Y,
\quad f \longmapsto f(e,-), \]
which is $H$-equivariant (here $Y$ is identified
with its functor of points). With this notation,
$\bfHom^H(G,Y)$ is $G$-equivariantly isomorphic 
to $\bfR_W(Z)$, and this isomorphism identifies 
$\varepsilon$ with $\ev_e$. 

This follows from the classical
identification of morphisms with their graph:
more specifically, for any scheme $S$, the data
of a morphism $f : G \times S \to Y$ 
is equivalent to that of its graph, a closed 
subscheme $\Gamma_f$ of $G \times S \times Y$
such that the projection 
$\pr_{12} : G \times S \times Y 
\to G \times S$
is an isomorphism. Moreover, $f$ is 
$H$-equivariant if and only if $\Gamma_f$
is stable under the $H$-action on 
$G \times S \times Y$ via 
$h\cdot (g,s,y) = (gh^{-1}, s, h \cdot y)$.
By descent for the $H$-torsor
$\varphi_S : G \times S \times Y \to Z \times S$,
the closed subschemes of $G \times S \times Y$
obtained in this way are exactly the 
pull-backs under $\varphi_S$ of the closed 
subschemes $\Gamma \subset Z \times S$
such that 
$\Gamma \stackrel{\sim}{\longrightarrow}
W \times S$ 
via $\psi \times \id_S$. This yields the
desired isomorphism of functors. Its $G$-equivariance
and compatibility with $\varepsilon$ and $\ev_e$
are easily verified.

For any $G$-scheme $X$, one may show that the map 
\[ \Hom^G(X, \bfHom^H(G,Y)) \longrightarrow 
\Hom^H(X,Y), 
\quad f \longmapsto \varepsilon \circ f \]
is an isomorphism, by using the canonical
isomorphism
\[ \Hom(X,\bfHom(G,Y)) 
\stackrel{\sim}{\longrightarrow} 
\Hom(G \times X,Y) \]
(see \cite[I.1.7.1]{SGA3}) and taking 
$G \times H$-invariants. 
Equivalently,  $\bfHom^H(G,Y)$
is right adjoint to the the restriction functor
from $G$-schemes to $H$-schemes. A left adjoint
to this restriction functor is provided by 
the associated fiber bundle: denoting 
by $i : Y \to G \times^H Y$
the canonical ($H$-equivariant) closed immersion, 
the map
\[ \Hom^G(G \times^H Y, X) \longrightarrow 
\Hom^H(Y,X), \quad f \longmapsto f \circ i \]
is an isomorphism as well.
\end{remark}

\noindent
{\sc Proof of Theorem \ref{thm:eq}.}
We first argue as in the beginning of the proof of
Theorem \ref{thm:afb}.
By \cite[Thm.~2]{Br17b}, $G$ has a smallest normal 
subgroup $H$ such that $G/H$ is proper; moreover, 
$H$ is affine and connected. Since $X$ is normal
and quasi-projective, it is $H$-quasi-projective
in view of \cite[Lem.~2.9]{Br17a}. 
Thus, there exists an $H$-equivariant open
immersion $f : X \to Y$, where $Y$ is an
$H$-projective scheme; then 
the associated fiber bundle $Z = G \times^H Y$ 
exists and is quasi-projective. Since $Y$
is proper, the projection 
$\pr_1 : G \times Y \to G$ is proper as well.
By descent (see \cite[IV.2.7.1]{EGA}), 
it follows that the morphism 
$\psi : Z \to G/H = W$ is proper as well.
As $W$ is proper, we see that $Z$ is proper,
and hence projective. 
By Lemma \ref{lem:imm}, the $H$-equivariant
immersion of $X$ in $Y$ yields a $G$-equivariant
immersion of $X$ in $\R_W(Z)$, and hence in $\Hilb(Z)$.
Since $X$ is a variety, its schematic image
in $\Hilb(Z)$ is a projective $G$-variety. Taking
its normalization yields the desired normal projective
equivariant completion.
\qed

\begin{remark}\label{rem:imperfect}
Assume that $k$ is imperfect. Then there exist (many)
$k$-forms $G$ of the additive group $\bG_a$
such that the regular completion $X$ of $G$
is not smooth (see \cite{Russell}; here $G$
is viewed as a smooth affine curve). The 
$G$-action on itself by translation extends
uniquely to a $G$-action on $X$, which is
the unique normal projective model of $G$.
In particular, $G$ admits no smooth
projective model.
\end{remark}

\section{Proof of Weil's regularization theorem}
\label{sec:wrt}

Throughout this section,  
we consider a smooth algebraic group $G$ 
and a smooth variety $X$. 
We first recall some notions and results 
from \cite[\S 3]{Demazure}.   

A \emph{rational action} of $G$ on $X$ 
is a rational map 
\[ a  : G \times X \dasharrow X, 
\quad (g,x) \longmapsto g \cdot x \]
which satisfies the following two properties:

\begin{enumerate}

\item[{\rm (i)}] The rational map
\[ (\pr_1,a) : G \times X \dasharrow G \times X,
\quad (g,x) \longmapsto (g, g \cdot x) \]
is dominant.

\item[{\rm (ii)}] The rational maps
\[ a \circ (\id_G \times a), 
a \circ (m \times \id_X) :
G \times G \times X \dasharrow X, \quad
(g,h,x) \longmapsto 
g \cdot (h \cdot x), gh \cdot x
\]
are equal, 
where $m : G \times G \to G$
denotes the multiplication.

\end{enumerate}

We denote by $V$ the domain of definition
of $a$; this is an open dense subset of 
$G \times X$. Since $G$ and $X$ are smooth,
$V$ is smooth as well. For any scheme $S$ and any
$g \in G(S)$,  $x \in X(S)$, we say that
$g \cdot x$ \emph{is defined} if $(g,x) \in V(S)$.
Then (ii) is equivalent to

\begin{enumerate}

\item[{\rm (iii)}] For any scheme $S$ and 
any $g,h \in G(S)$,  $x \in X(S)$, 
if $h \cdot x$ and $g \cdot (h \cdot x)$
are defined, 
then $gh \cdot x$ is defined and
equals $g \cdot (h \cdot x)$.

\end{enumerate}

For any $g \in G$ with residue field $\kappa(g)$,  
the fiber $\pr_1^{-1}(g) \subset G \times X$
is identified with $X_{\kappa(g)}$ and this identifies
$V \cap \pr_1^{-1}(g)$ with an open subset
$V_g$ of $X_{\kappa(g)}$. 
In view of \cite[Lem.~1]{Demazure}, 
$V_g$ is dense in $X_{\kappa(g)}$ and
the morphism
\[ a_g : V_g \longrightarrow X_{\kappa(g)},
\quad x \longmapsto g \cdot x \]
is dominant. In particular, $V_e$ is
a dense open subset of $X$. 
As a consequence,  the morphism
$a_e : V_e \to X$ is the inclusion
(indeed, we have $e \cdot x = e \cdot (e \cdot x)$
for any $x \in X$ such that $e \cdot x$ and
$e \cdot (e \cdot x)$ are defined.
Thus, $y = e \cdot y$ for any $y$ in a dense open
subset of $X$). Also, the rational map
$(\pr_1,a)$ is birational (see \cite[p.~515]{Demazure}).

With this notation, Weil's regularization theorem
asserts that \emph{there exists a $G$-variety
$Y$ and a birational $G$-equivariant map 
$f : X \dasharrow Y$ }, 
i.e.,  $f(g \cdot x) = g \cdot f(x)$ whenever
$(g,x) \in V$ and $f$ is defined at $g \cdot x$.
(This result was originally stated for 
a geometrically integral variety $X$, but there 
is no harm in assuming $X$ smooth).

Weil's regularization theorem is proved in
\cite[Prop.~9]{Demazure} under the
additional assumption that $a$ is 
\emph{pseudo-transitive}, 
i.e., the morphism
\[ (a,\pr_2) : V \longrightarrow X \times X,
\quad (g,x) \longmapsto (g \cdot x,x) \]
is dominant. Then $Y$ may be taken 
\emph{homogeneous} in the sense that
the analogously defined morphism
$G \times X \to X \times X$
is faithfully flat. 
We will deduce the general case
from this special one, 
which is arguably much easier. 

We start with a reduction to the case where
$k$ is \emph{algebraically closed}.
Assume that there exists a $\bar{k}$-variety
$Y$ which is $G_{\bar{k}}$-birationally
isomorphic to $X_{\bar{k}}$. 
By the principle of the finite extension
(see \cite[IV.9.1]{EGA}),
there exists a finite subextension $K/k$
of $\bar{k}/k$, a $K$-variety $Z$ equipped
with an action of $G_K$, and a
$G_K$-equivariant birational map
\[ \psi: X_K \dasharrow Z. \] 
By Theorems \ref{thm:gen} and
\ref{thm:eq}, we may further 
assume that $Z$ is projective.
Then the Weil restriction functor  
$\bfR_{K/k}(Z/K) = \bfR_K(Z)$ 
is represented by an open subscheme 
$\R_K(Z)$ of the Hilbert scheme 
parameterizing finite subschemes of
length $n$ of $Z$, where $n = [K:k]$.
In particular, the scheme $\R_K(Z)$ is 
quasi-projective (see \cite[A.5.8]{CGP}
for a direct proof of this fact).

Choose an open subset $U \subset X_K$ 
on which $\psi$ is defined, and let
$F = X_K  \setminus U$.  Also, denote by
$\pi : X_K \to X$ the projection. 
Since $\pi$ is finite and surjective, 
$\pi(F)$ is a closed subset of $X$ and 
$X \setminus \pi^{-1} \pi(F) = 
(X \setminus \pi(F))_K$ is a dense open
subset of $U$.  Replacing $X$ with
$X \setminus \pi(F)$, we may thus assume
that $\psi$ is a morphism. 
Similarly, we may further assume that 
$\psi$ is an open immersion.

Denoting by $i : V \to  G \times X$ 
the inclusion of the domain of definition
of $a$, the $G_K$-equivariance
of $\psi$ yields a commutative diagram
\[ \xymatrix{
V_K \ar[r]^-{i_K} \ar[dr]_{a_K} &
G_K \times_K X_K \ar[r]^-{\id \times \psi} &
G_K \times_K Z \ar[d]^-b \\
& X_K \ar[r]^{\psi} & Z,
} \]
where $b$ denotes the action. 
Since Weil restriction commutes 
with fibered products (see \cite[A.5.2]{CGP}),
this yields in turn a commutative diagram
\[ \xymatrix{
\R_K(V_K) \ar[r]^-{\R_K(i_K)} \ar[dr]_{\R_K(a_K)} &
\R_K(G_K) \times \R_K(X_K) \ar[r]^-{\id \times \R_K(\psi)} &
\R_K(G_K) \times \R_K(Z) \ar[d]^-{\R_K(b)} \\
& \R_K(X_K) \ar[r]^{\R_K(\psi)} & \R_K(Z),
} \]
where $\R_K(i_K)$,  $\R_K(\psi)$ and 
$\id \times \R_K(\psi)$ are open immersions
(see \cite[7.6/2]{BLR}).
We also have a commutative square 
\[ \xymatrix{
V \ar[r]^-i \ar[d]_{j_V} & 
G \times X \ar[d]^{j_G \times j_X}  \\
\R_K(V_K) \ar[r]^-{\R_K(i_K)} &
\R_K(G_K) \times \R_K(X_K),
} \]
where the adjunction morphism $j_V$ 
is a closed immersion, and likewise for
$j_G$ and $j_X$ (see \cite[A.5.7]{CGP}).
Thus, $\gamma = \R_K(\psi) \circ j_X$
is a (locally closed) immersion. 
Also, we have 
$\R_K(a_K) \circ j_V = j_X \circ a$
by functoriality (see \cite[A.5.7]{CGP} 
again).  By concatenating the two diagrams
above, this yields a commutative diagram
\[ \xymatrix{
V \ar[r]^-{i} \ar[dr]_{a} &
G \times X \ar[r]^-{\id \times \gamma} &
G \times \R_K(Z) \ar[d]^-{c} \\
& X \ar[r]^{\gamma} & \R_K(Z),
} \]
where $c$ denotes the pull-back to
$G$ of the action of $\R_K(G_K)$.
Denote by $W$ the schematic image of 
$\gamma$; then $W$ is $G$-stable, 
since the formation of the schematic image 
commutes with flat base extension. 
Thus, $\gamma$ factors through
an open immersion $\delta : X \to W$
which lies in a commutative diagram
\[ \xymatrix{
V \ar[r]^-{i} \ar[dr]_{a} &
G \times X \ar[r]^-{\id \times \delta} &
G \times W \ar[d]^-{d} \\
& X \ar[r]^{\delta} & W,
} \]
where $d$ denotes the $G$-action on $W$.
Thus, $W$ is the desired model.

So we may assume that $k$ is algebraically 
closed. At this stage we may conclude by
using the main result of \cite{Kraft}, but 
we prefer to give a fully independent proof.

We now construct the ``variety of orbits'' 
for the rational action of $G$ on $X$, by adapting
arguments from \cite[IV.1, IV.2]{Springer}.
The group $G(k)$ acts on the field of rational
functions $k(X)$ by automorphisms. Denote by
$K$ the fixed subfield; then $K$ is finitely 
generated over $k$, and hence equals $k(Y)$ 
for some variety $Y$ equipped with a dominant
rational map $f : X \dasharrow Y$. 
Replacing $X$ and $Y$ with suitable dense open 
subsets, we may assume that they are both 
affine, and $f$ is a morphism. Since the 
extension $k(X)/K$ is separable (see 
\cite[Lem.~IV.1.5]{Springer}), we may further
assume that $f$ is smooth. Finally, using generic
freeness (see \cite[IV.6.9.2]{EGA}), we may
assume that the $\cO(Y)$-module $\cO(X)$ is free.

By density of $k$-rational points, we have
$f(g \cdot x) = f(x)$ for all $g \in G$ and 
$x \in X$ such that $g \cdot x$ is defined. 
Thus, $(a,\pr_2): V \to X \times X$ yields 
a morphism 
\[ \gamma : V \longrightarrow X \times_Y X. \]
We claim that 
$\gamma$ \emph{is schematically dominant}.
As $X \times_Y X = \Spec \,
\cO(X) \otimes_{\cO(Y)} \cO(X)$,
this is equivalent to the injectivity of
$\gamma^{\#} : \cO(X) \otimes_{\cO(Y)} \cO(X)
\to \cO(V)$.
To show this, let 
$u_1,\ldots,u_r,v_1, \ldots, v_r \in \cO(X)$
such that 
$\gamma^{\#}(\sum_{i=1}^r u_i \otimes v_i) = 0$. 
We may assume that $u_1,\ldots,u_r$ are part 
of a basis of the $\cO(Y)$-module $\cO(X)$;
then they are linearly independent over $K$.
We then have 
$\sum_{i=1}^r u_i(g \cdot x)  v_i(x) = 0$
for all $x \in X$ and $g \in G(k)$ such that
$g \cdot x$ is defined. Thus, we have 
$\sum_{i=1}^r (g \cdot u_i)  v_i = 0$
in $k(X)$ for any $g \in G(k)$. By
\cite[Lem.~IV.1.5]{Springer} again, it follows
that $v_1 = \cdots = v_r = 0$. This proves
the claim.

Denote by $\eta$ the generic point of $Y$;
then the generic fiber $X_{\eta}$ is a smooth
$K$-variety equipped with a rational action of 
the $K$-group $G_K$. As the morphism
$\eta \to Y$ is flat and $\gamma$ is
schematically dominant, the base change
$\gamma_{\eta} : V \times_Y \eta \to 
(X \times_Y X) \times_Y \eta$ 
is schematically dominant as well
(see \cite[IV.11.10.5]{EGA}). So
the rational map
\[ (a,\pr_2) : G_K \times_K X_{\eta}
\dasharrow X_{\eta} \times_K X_{\eta} \]
is dominant, i.e., the rational
action of $G_K$ on $X_{\eta}$ is
almost transitive. By \cite[Prop.~9]{Demazure}, 
there exists a $G_K$-homogeneous $K$-variety 
$Z$ which is $G_K$-equivariantly birational 
to $X_{\eta}$. In particular, $K(Z) = k(X)$.

It remains to ``regularize'' $Z$.
We first treat the case where $G$ is 
\emph{affine}, or equivalently linear. 
Then $Z$ is $G_K$-quasi-projective;
equivalently, 
there exists a homomorphism
$\rho: G_K \to \GL_{n,K}$ and
an immersion $Z \hookrightarrow \bP^{n-1}_K$
which is equivariant relative to $\rho$
(see e.g. \cite[Cor.~2.14]{Br17a}).
Denote by $(a_{ij})_{1 \leq i,j \leq n}$
the matrix coefficients of $\rho$ and
by $d$ their determinant. Then
$a_{ij}, d^{-1} \in \cO(G_K) = \cO(G) \otimes_k K$.
We may choose a finitely generated subalgebra
$R$ of $K$ which contains the $a_{ij}$ and
$d^{-1}$, and has fraction field $K$. Let 
$Y = \Spec(R)$, then $\rho$ extends to
a unique homomorphism of $Y$-group schemes
\[ \rho_Y : G_Y = G \times Y 
\longrightarrow \GL_{n,Y}. \] 
Denote by $W$ the closure of $Z$ in 
$\bP^n_Y = \bP^n \times Y$; then $W$ is stable 
under the action of $G_Y$ via $\rho_Y$. 
The resulting morphism 
$G_Y \times_Y W \to W$ yields a $G$-action
on $W$, and in turn the desired regularization.

For an arbitrary (smooth) algebraic group $G$,
we first reduce to the case where $G$ 
is \emph{connected} by adapting the proof of 
\cite[Thm.~1]{Rosenlicht}. Denote by $G^0$
the neutral component of $G$, and assume that
$X$ is $G^0$-birationally isomorphic to
a $G^0$-variety. We may then assume that
$X$ is a $G^0$-variety, i.e., $g \cdot x$
is defined for any $g \in G^0$ and $x \in X$. 
Also, we may choose $g_1,\ldots,g_r \in G(k)$ 
such that $G = \bigcup_{i=1}^r G^0 \, g_i$
(since $k$ is algebraically closed). Then 
$g_i \cdot x$ is defined for any $i = 1,\ldots, r$ 
and for any 
$x \in \bigcap_{i=1}^r  V_{g_i} = X \setminus Y$, 
where $Y \subsetneq X$ is closed. Thus,
$h \cdot (g_i \cdot x)$ is defined for any
$h \in G^0$ and $i,x$ as above, i.e., 
$g \cdot x$ is defined for all $g \in G$ 
and $x \in X \setminus Y$.

Let $Z = \bigcap_{h \in G^0(k)} h \cdot Y$;
then $Z$ is a closed subset of $X$, 
stable by $G^0(k)$.
Since $G$ is smooth, $G^0(k)$ is schematically dense 
in $G^0$ and hence $Z$ is $G^0$-stable. We claim that 
$g \cdot x$ is defined for any $g \in G$ and 
$x \in X \setminus Z$. Indeed, 
$x \in X \setminus h \cdot Y$ for some $h \in G^0(k)$.
Thus, $h^{-1} \cdot x$ is defined and lies in 
$X \setminus Y$. So $gh \cdot (h^{-1} \cdot x)$ is
defined. 
By using (iii), this yields the claim.

Next, let $W$ be the subset of $X$ consisting of
those $x$ such that $g_i \cdot x$ is defined and
lies in $Z$ for some $i$. Then $W$ is closed in 
$X$: indeed, $W$ contains $Z$ and 
$W \setminus Z = 
\bigcap_{i=1}^r g_i^{-1}(X \setminus Z)$
is closed in $X \setminus Z$. 
Also, $W$ is stable by $G^0(k)$
(since the latter is normalized by the $g_i$)
and hence by $G^0$.
For any $g \in G$ and $x \in X \setminus W$,
we have that $g \cdot x$ is defined (as $x \notin Z$)
and $g \cdot x \notin W$ (otherwise, 
$g_i g \cdot x \in Z$ for some $i$, and hence
$g_j \cdot x \in Z$ for $j$ such that 
$g_i g \in G^0 g_j$. So $x \in W$, a contradiction).
In other terms, $g$ is defined at any point of 
$X \setminus W$, and stabilizes this open subset.
As a consequence, each $g_i$ yields an automorphism
of $X \setminus W$. So the $G^0$-action on 
$X \setminus W$ extends to a $G$-action.

Thus, we may assume that ($k$ is algebraically 
closed and) $G$ is connected.
In view of \cite[Prop.~8]{Demazure}, we may further
assume that the rational action of $G$ on $X$ is 
\emph{faithful}, i.e., its schematic kernel is 
trivial. We have a unique exact sequence
\[ 1 \longrightarrow N \longrightarrow G
\stackrel{\pi}{\longrightarrow} A 
\longrightarrow 1, \]
where $N$ is smooth, connected and affine, 
and $A$ is an abelian variety (see e.g.
\cite[Thm.~8.28]{Milne}). Also, there exists
a finite Galois extension of fields
$L/K$ such that $Z_L$ has a $L$-rational
point. Then $Z_L$ is a homogeneous
space $G_L/H$, where $H$ is affine
(see \cite[Prop.~8.9]{Milne}). Consider the
exact sequence 
\[ 1 \longrightarrow N_L \longrightarrow G_L
\longrightarrow A_L \longrightarrow 1. \]
The schematic image of $H$ in $A_L$ 
is an affine subgroup, and hence is finite.
Thus, the homogeneous space 
$G_L/N_L H$ is a quotient of $A_L$ by 
a finite subgroup. Also, the natural
morphism $G_L/H \to G_L/N_L H$ is the
geometric quotient by the action of 
$N_L$ via left multiplication on $G_L$.
By Galois descent, it follows that
the geometric quotient $Z \to Z/N_K$ 
exists, and $Z/N_K$ is a torsor under
the quotient of $A_K$ by a finite subgroup. 
Since $Z$ is smooth,
Lemma \ref{lem:tabelian} yields
an $A_K$-equivariant morphism 
$Z/N_K \to A_K/A[n]_K$ for 
some positive integer $n$.
Denote by $I$ the pull-back of $A[n]$
under the homomorphism $\pi: G \to A$; 
then $N \subset I \subset G$ and 
$I/N$ is finite. Moreover, we have
a $G_K$-equivariant morphism 
$Z \to Z/N_K \to A_K/A[n]_K = G_K/I_K$
and hence $Z \simeq G_K \times^{I_K} W$
for some $I_K$-scheme $W$. Since $Z$ 
is normal and quasi-projective, it is
$I_K$-quasi-projective (see again 
\cite[Cor.~2.14]{Br17a}); thus, so is $W$.
By arguing as in the affine case, one
obtains an affine variety $Y$ with function
field $K$, and a scheme $V$ equipped with 
an action of $I$ and an $I$-invariant morphism 
$V \to Y$ having generic fiber $W$; moreover,
$V$ is $I$-quasi-projective. Thus, 
$G \times^I V$ exists and yields the desired
regularization.

\section{Actions of non-smooth algebraic groups}
\label{sec:ns}

Throughout this section, the ground field $k$  
has characteristic $p > 0$
(otherwise every algebraic group is smooth). 
We first construct actions having no normal
projective model:

\begin{example}\label{ex:nonsmooth}
We consider torsors under the ``smallest'' 
non-smooth algebraic group $G = \alpha_p$,
the kernel of the Frobenius homomorphism
$F : \bG_a \to \bG_a$,  $z \mapsto z^p$.

Let $C$ be an integral affine curve and 
consider the product $C \times \bA^1$ 
with projections $x$, $y$. Choose 
$f \in \cO(C)$ 
and denote by $X \subset C \times \bA^1$ 
the zero subscheme of $y^p - f(x)$. 
So we have a cartesian square
\[ \xymatrix{
X \ar[r]^{y} \ar[d]_x & \bA^1 \ar[d]^F \\
C \ar[r]^-{f} & \bA^1. 
}\]

The group $G$ acts on $C \times \bA^1$ via
$g \cdot (x, y)  = (x, y + g)$
and this action stabilizes $X$.
The projection $x : X \to C$ is a $G$-torsor,
the pull-back of $F$ under $f$.
By \cite[III.4.6.7]{DG}, every $G$-torsor 
over an affine scheme is obtained via 
this construction. 

The affine curve $X$ is integral if and 
only if $f$ is not a $p$th power in 
the function field $k(C)$. When $C$ is
normal, this is equivalent to $f$ 
not being a $p$th power in $\cO(C)$, 
and hence to the non-triviality of the 
torsor $x$ (see \cite[III.4.6.7]{DG} again).

We now assume for simplicity that
$k$ is algebraically closed and $C$ is
smooth. Then $X$ is non-normal if and only if
the morphism $f : C \to \bA^1$ is ramified; 
this holds e.g. if $f$ has a multiple factor when
expressed in local coordinates at some point.

Assuming in addition that $X$ is 
integral and non-normal, we claim that 
\emph{the $G$-action on $X$ does not lift 
to an action on the normalization $\tilde{X}$}. 
Otherwise, $G$ acts freely on $\tilde{X}$
and hence there exists a $G$-torsor
$q : \tilde{X} \to \tilde{X}/G$, where
$\tilde{X}/G$ is an integral affine curve
(see \cite[III.2.2.3, III.2.6.1]{DG}). 
Thus, the normalization morphism
$\nu: \tilde{X} \to X$ descends to
a morphism $\mu : \tilde{X}/G \to C$;
equivalently, we have a commutative square
\[ \xymatrix{
\tilde{X} \ar[r]^{\nu} \ar[d]_q & X \ar[d]^x \\
\tilde{X}/G \ar[r]^-{\mu} & C.
}\] 
As the vertical arrows are $G$-torsors, 
this square is cartesian. Since $\nu$ 
is finite, so is $\mu$ by descent. Also, 
since $\nu$ is birational and every open subset 
of $X$ is $G$-stable, it follows that
$\mu$ is birational as well. As $C$ is smooth, 
$\mu$ is an isomorphism. So $\nu$ is 
an isomorphism as well, a contradiction. 
This proves our claim.

Next, we show that 
\emph{$X$ admits no normal projective model}. 
Indeed, $X$ has a projective model $\bar{X}$, 
its closure in $\bar{C} \times \bP^1$ where 
$\bar{C}$ denotes the smooth projective 
completion of $C$, and $G$ acts on 
$\bar{C} \times \bP^1$ 
via
$g \cdot (x,[y:z]) = (x, [y + g z : z])$.
If $X$ has a normal projective model 
$X'$, then the $G$-equivariant rational map
of curves $X' \dasharrow X$ 
yields an equivariant morphism 
$\varphi: X' \to \bar{X}$. Thus, 
$\varphi$ is the normalization, and hence 
so is its pull-back $\varphi^{-1}(X) \to X$, 
a contradiction to the claim.
\end{example}

Next, we discuss the notion of rational action
in the setting of non-smooth groups,
for which we could locate no convenient reference.

As in \cite[9.6]{GW}, we define a 
\emph{rational map} $f : X \dasharrow Y$
(where $X$ and $Y$ are schemes) as an
equivalence class of pairs $(U,g)$,
where $U$ is a schematically dense open subset
of $X$, and $g : U \to Y$ is a morphism. Here 
$(U,g)$ is said to be equivalent to $(V,h)$ 
if there exists a schematically dense open
subset $W$ of $U \cap V$ such that 
$g \vert_W = h \vert_W$. (Equivalently,
$f$ is a pseudo-morphism in the sense of 
\cite[IV.20.2]{EGA}). There exists a largest
such subset $U$, the \emph{domain of definition} 
of $f$.

Next, a rational action of an algebraic group
$G$ on a smooth variety $X$ is defined as 
in Section \ref{sec:wrt},  by replacing ``dominant'' 
with ``schematically dominant'' in (i).
The arguments in the beginning of 
\cite[\S 3]{Demazure} adapt readily with
this replacement, and yield the equivalence 
of (ii) and (iii). Also, for any $g \in G$
with residue field $\kappa(g)$, we may still
identify $V \cap \pr_1^{-1}(g)$ with a
schematically dominant open subset 
$V_g$ of $X_{\kappa(g)}$, and the morphism
$V_g \to X$, $g \mapsto g \cdot x$ 
is (schematically) dominant. Here $V$ denotes 
again the domain of definition of the action.
Thus, we still have that $e \cdot x = x$
for all $x \in V_e$.

We may now state:

\begin{theorem}\label{thm:weil}
Let $X$ be a smooth variety equipped with 
a rational action of an algebraic group 
$G$. Then $X$ is $G$-birationally isomorphic 
to a projective $G$-variety.
\end{theorem}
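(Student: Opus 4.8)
The plan is to reduce Theorem \ref{thm:weil} to the smooth case, namely to the version of Weil's regularization theorem proved in Section \ref{sec:wrt}. The obvious obstruction is that $G$ may be non-smooth, so we cannot directly invoke \cite[Prop.~9]{Demazure} or the arguments of Section \ref{sec:wrt}, which use density of rational points and schematic density of $G^0(k)$ in $G^0$. The standard device for such situations is to quotient out the infinitesimal part: let $G_{\red}$ be the reduced subscheme of $G$, which over a perfect field is a smooth subgroup, but over an imperfect $k$ need not be a subgroup at all. So instead I would first base-change to $\bar k$, carry out the argument there, and descend.

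First I would reduce to $k$ algebraically closed by the same Weil-restriction descent argument used at the start of the proof of Weil's regularization theorem in Section \ref{sec:wrt}: if $X_{\bar k}$ is $G_{\bar k}$-birationally isomorphic to a projective $G_{\bar k}$-variety, then (using the principle of the finite extension, Theorems \ref{thm:gen} and \ref{thm:eq} to arrange the model projective, and the representability of the Weil restriction functor on the Hilbert scheme together with compatibility of Weil restriction with fibered products and the closed-immersion adjunction morphisms from \cite[A.5.2, A.5.7, A.5.8]{CGP}) one produces a projective $G$-variety $W$ over $k$ together with the required $G$-equivariant birational map $X \dasharrow W$. This part is word-for-word the reduction already written in Section \ref{sec:wrt}; nothing new is needed.

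So assume $k$ is algebraically closed. Let $G_{\red} \subset G$ be the reduced subscheme; it is a smooth algebraic group, and the quotient $G/G_{\red}$ is infinitesimal, in particular connected with a single $k$-point, so $G(k) = G_{\red}(k)$. The rational action $a : G \times X \dasharrow X$ restricts to a rational action of the smooth group $G_{\red}$ on the smooth variety $X$ — the conditions (i), (ii), (iii) of Section \ref{sec:wrt} are inherited on restricting along the closed immersion $G_{\red} \times X \hookrightarrow G \times X$, because the domain of definition $V$ meets $G_{\red} \times X$ in a schematically dense open subset (its complement is a proper closed subset, and $G_{\red} \times X$ is integral). By Weil's regularization theorem, as proved in Section \ref{sec:wrt}, there is a $G_{\red}$-variety $Y$ and a $G_{\red}$-equivariant birational map $f : X \dasharrow Y$; moreover, tracing through that proof, $Y$ may be taken projective (by applying Theorems \ref{thm:gen} and \ref{thm:eq} to the normal quasi-projective model produced there). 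The last step — and this is where I expect the real work to lie — is to show that the $G_{\red}$-action on $Y$ extends to a $G$-action compatible with the rational $G$-action on $X$. The mechanism should be: because $G/G_{\red}$ is infinitesimal, $G$ is an extension of the infinitesimal group $G/G_{\red}$ by $G_{\red}$, and an action of such a $G$ on $Y$ is given by an action of $G_{\red}$ together with compatible infinitesimal data; concretely, the rational action of $G$ on $X$ supplies, on the open set where it is defined, the corresponding infinitesimal datum (a compatible action of the relevant Frobenius kernel, or equivalently a lift of the $p$-Lie-algebra action), and since $f$ is birational and $Y$ is a variety, this rational datum on $Y$ is automatically regular on all of $Y$ when $Y$ is smooth — and in general one passes to the smooth locus first, then uses properness to conclude as in the proof of Theorem \ref{thm:eq}.

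The main obstacle is precisely this last extension step: one must verify that the infinitesimal (height-one, Frobenius-kernel) part of the $G$-action, which a priori lives only on the domain of the rational action inside $X$, transports across the birational map $f$ and extends to a genuine morphism $G \times Y \to Y$ on the projective variety $Y$. I would handle it by first replacing $Y$ with its smooth locus $Y_{\reg}$, which is $G_{\red}$-stable by the argument in the proof of Theorem \ref{thm:gen} (flatness of the action morphism), and on which the rational action of $G$ — transported from $X$ via $f$ — extends uniquely to a regular action by Weil's extension theorem \cite[Thm.~4.4/1]{BLR} applied to the morphism $G \times Y_{\reg} \dasharrow Y_{\reg}$ (its indeterminacy locus has codimension $\geq 2$, since $G \times Y_{\reg}$ is smooth and the action is already defined in codimension one, being defined on a $G_{\red}$-stable dense open and, infinitesimally, everywhere). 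Then one re-completes $Y_{\reg}$ equivariantly to a normal projective $G$-variety by Theorems \ref{thm:gen} and \ref{thm:eq} — now with the full group $G$, which is legitimate since those theorems hold for arbitrary smooth $G$ and, for the completion, one can either work with $G_{\red}$ and then note the $G/G_{\red}$-action is automatic, or check that the constructions there are insensitive to nilpotents in $G$. This yields the desired projective $G$-variety birational to $X$.
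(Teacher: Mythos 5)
Your overall outline (reduce to $\bar k$, regularize the $G_{\red}$-action, then handle the infinitesimal part) matches the paper's, but the two places where the real work happens both have gaps. The decisive one is the extension of the infinitesimal part of the action. You invoke Weil's extension theorem \cite[Thm.~4.4/1]{BLR} for the rational map $G \times Y_{\reg} \dasharrow Y_{\reg}$, but that theorem applies to rational maps from a smooth scheme \emph{into a smooth group scheme} (or a torsor under one), not into an arbitrary smooth variety; and your claim that the indeterminacy locus has codimension $\geq 2$ is not justified --- the domain of definition of a rational action can perfectly well omit a divisor, and ``defined infinitesimally everywhere'' is not a codimension statement. The paper's mechanism is different and is the key idea you are missing: after regularizing the $G_{\red}$-action it shrinks $X$ to the $G_{\red}$-stable schematically dense open subset $(V_n)_e$, where $V_n \subset G_n \times X$ is the domain of definition of the rational action of the Frobenius kernel $G_n$; since $G_n$ is infinitesimal, $\pr_2 : G_n \times X \to X$ is a homeomorphism of underlying topological spaces, so the open subset $V_n$, meeting the fiber over $e$ in all of $X$, must equal $G_n \times X$ as a scheme. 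Hence $G_n$ genuinely acts on this shrunken $X$, and $G = G_n\, G_{\red}$ acts by condition (iii) of Section \ref{sec:wrt}. No extension across a birational map, and no codimension estimate, is needed.

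The second gap is the passage from a $G$-variety to a \emph{projective} $G$-variety when $G$ is non-smooth: Theorem \ref{thm:eq} is stated for smooth $G$, and your suggestion that ``the $G/G_{\red}$-action is automatic'' on a completion is unsupported --- Example \ref{ex:nonsmooth} shows that infinitesimal actions do not in general transfer across birational modifications, and in particular the \emph{normal} projective model you claim to produce need not exist. The paper instead quotients by $G_n$, makes $X/G_n$ regular and quasi-projective via Theorem \ref{thm:gen}, pulls back an ample $H$-linearized line bundle along the finite morphism $X \to X/G_n$, and reruns the proof of Theorem \ref{thm:eq} \emph{omitting the final normalization}. That omission is essential, and the normalization step in your proposal would break the construction.
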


\begin{proof}
We first consider the case where $G$ acts on $X$.
We may choose a positive integer $n$ such that 
$G/G_n$ is smooth, where $G_n$ denotes 
the $n$th Frobenius kernel
(see e.g. \cite[Prop.~2.9.2]{Br17b}).
By \cite[Lem.~2.8]{Br17a}, 
the quotient $X/G_n$ exists and is a $G/G_n$-variety.
Using Theorem \ref{thm:gen}, we may further assume 
that $X/G_n$ is regular and quasi-projective.
We now claim that $X$ admits a $G$-equivariant
projective completion.

By \cite[Lem.~2.9]{Br17a}, $X/G_n$ admits an
ample $H$-linearized line bundle, 
where $H$ denotes the smallest normal subgroup
of $G$ such that $G/H$ is proper., 
and $H$ acts on $X/G_n$ via its quotient
$H/H \cap G_n = H/H_n$.
Since the quotient morphism $X \to X/G_n$ is finite, 
$X$ also admits an ample $H$-linearized line bundle.
The claim follows from this 
by arguing as in the proof of Theorem \ref{thm:eq}
(except for the final step of normalization).

Next, we turn to the general case, where
$G$ acts rationally on $X$. By the above claim,
it suffices to show that $X$ is $G$-birationally
isomorphic to a $G$-variety. For this, we
may assume that $k$ is algebraically closed
by the first reduction step in the proof
of Weil's regularization theorem, which extends 
without any change. 
We then have $G = G_n \, G_{\red}$,
where $G_{\red}$ denotes the largest smooth
subgroup of $G$ (the closure of $G(k)$),
see e.g. \cite[Lem.~2.8.6]{Br17b}. 
Using the regularization theorem, we may further 
assume that $X$ is a $G_{\red}$-variety.
Denote by $V_n \subset G_n \times X$
the domain of definition of the rational 
action $a_n : G_n \times X \dasharrow X$. Then
$V_n$ is schematically dense in $G_n \times X$,
and stable by $G(k)$ (acting on $G_n \times X$
via $g \cdot (h,x) = (ghg^{-1}, g \cdot x)$)
in view of the condition 
(iii) in Section \ref{sec:wrt}. Thus,
$V_n \cap \pr_1^{-1}(e)$ is identified
with a schematically dense open subset 
$(V_n)_e$ of $X$, which is stable by $G(k)$ as well. 
As a consequence, $(V_n)_e$ is stable by $G_{\red}$. 
We may thus assume that $(V_n)_e = X$. 

Since $G_n$ is infinitesimal, the projection 
$\pr_2: G_n \times X \to X$ induces a homeomorphism
of the underlying topological spaces,  
with inverse homeomorphism $(e,\id_X)$. 
As $V_n$ is open  in $G_n \times X$, it follows 
that $V_n = G_n \times X$ as schemes, i.e., 
$G_n$ acts on $X$. Since $G_{\red}$ also
acts on $X$, the domain of definition
of the rational $G$-action is the whole $X$
by (iii) again. So $X$ is a $G$-variety.
\end{proof}

\medskip

\noindent
{\bf Acknowledgements.} Many thanks to Igor Dolgachev, 
Antoine V\'ezier and Susanna Zimmermann 
for stimulating discussions and email exchanges  
on the topics of this paper. I also thank 
J\'er\'emy Blanc, Pascal Fong and Ronan Terpereau
for very helpful comments on a first version.

\bibliographystyle{amsalpha}

\medskip

\noindent
Universit\'e Grenoble Alpes, 
Institut Fourier, 
100 rue des Math\'ematiques,
38610 Gi\`eres, France

\end{document}